\newcommand{\rmsize}{\rm size}
\newcommand{\pow}{d}
\newcommand{\cP}{\mathcal{P}}
\newcommand{\cC}{\mathcal{C}}
\newcommand{\bF}{\mathbb{F}}
\newcommand{\F}{\mathbb F}
\newcommand{\B}{\mathcal{B}}
\newcommand{\Z}{\mathbb Z}
\newcommand{\C}{\mathbb C}
\newcommand{\e}{{\epsilon}}
\newcommand{\NOTi}{{\rm{NOT}}^{-,+}_i}
\newcommand{\NOT}{{\rm{NOT}}^{-,+}}
\newcommand{\g}{\mathfrak{g}}
\newcommand{\ch}{\check{h}}
\newcommand{\one}{\mathbf{1}}
\newcommand{\cosm}{\rm{cos}(\frac{\pi}{m})}
\newcommand{\sinm}{\rm{sin}(\frac{\pi}{m})}
\newcommand{\xornot}{\Lambda^2_{\rm{XOR}}\rm{NOT}}
\newcommand{\cI}{\mathcal{I}}
\newcommand{\cA}{\mathcal{A}}
\newcommand{\bee}{\begin{equation}}
\newcommand{\eee}{\end{equation}}
\DeclareMathOperator{\SU}{SU}
\newcommand{\su}{\mathfrak{su}}
\DeclareMathOperator{\SO}{SO}
\newcommand{\so}{\mathfrak{so}}
\DeclareMathOperator{\Sp}{Sp}
\DeclareMathOperator{\PSp}{PSp}
\DeclareMathOperator{\Irr}{Irr}
\DeclareMathOperator{\Hom}{Hom}
\DeclareMathOperator{\im}{i}
\DeclareMathOperator{\End}{End}
\newcommand{\om}{\omega}
\DeclareMathOperator{\U}{U}
\newcommand{\ot}{\otimes}
\newcommand{\bfe}{\mathbf{e}}
\newcommand{\be}{\begin{equation}}
\newcommand{\ee}{\end{equation}}
\numberwithin{equation}{section}
\newtheorem{thm}{Theorem}
\newtheorem{defn}[equation]{Definition}
\newtheorem{lemma}[equation]{Lemma}
\newtheorem{conj}[equation]{Conjecture}
\newtheorem{prop}[equation]{Proposition}
\theoremstyle{definition}
\newtheorem{definition}[equation]{Definition}
\begin{document}

\title[Metaplectic UMCs]
{On Metaplectic Modular Categories and their applications}

\author{Matthew B. Hastings$^{1}$, Chetan Nayak$^{1,2}$, and Zhenghan Wang$^1$}
\address{$^1$Microsoft Station Q\\ University of California\\ Santa Barbara, CA 93106}
\address{$^2$Department of Physics\\University of California\\Santa Barbara, CA 93106}


\thanks{M.H was partially supported by a Simons Investigator award from the Simons Foundation.  C.N. is partially supported by the DARPA QuEST program and the AFOSR under grant FA9550-10-1-0524.  Z.W. is partially supported by NSF DMS 1108736.  We thank I. Arad for pointing out reference \cite{GoldbergJerrum}}

\begin{abstract}

For non-abelian simple objects in a unitary modular category, the density of their braid group representations, the $\#P$-hard evaluation of their associated link invariants, and the BQP-completeness of their anyonic quantum computing models are closely related.  We systematically study such properties of the non-abelian simple objects in the metaplectic modular categories $SO(m)_2$ for an odd integer $m\geq 3$.  The simple objects with quantum dimensions $\sqrt{m}$ have finite image braid group representations, and their link invariants are classically efficient to evaluate.  We also provide classically efficient simulation of their braid group representations.  These simulations of the braid group representations can be regarded as qudit generalizations of the Knill-Gottesmann theorem for the qubit case.   The simple objects of dimension $2$ give us a surprising result:  while their braid group representations have finite images and are efficiently simulable classically after a generalized localization, their link invariants are $\#P$-hard to evaluate exactly.  We sharpen the $\#P$-hardness by showing that any sufficiently accurate approximation of their associated link invariants is already $\#P$-hard.

\end{abstract}

\maketitle

\section{Introduction}

Unitary modular tensor categories (UMCs) are intricate algebraic structures arose in a variety of fields, in particular in the study of topological quantum field theories (TQFTs) \cite{turaev}, conformal field theories (CFTs) \cite{MS}, and topological phases of matter \cite{Wang}.  Mathematical constructions of UMCs include representation theories of quantum groups at roots of unity and vertex operator algebras.  In condensed matter physics, each UMC is a theoretical anyonic system, and a simple object models an anyon.  Therefore, we will use the words {\it simple object} in a UMC and {\it anyon} interchangeably.    In topological quantum computation, any non-abelian anyon can be used to construct an anyonic quantum computing model \cite{Wang}.  The realization of UMCs in real physical systems and the universality of anyonic quantum computing models inspire many new mathematical problems.

In this context, the most studied sequence of UMCs is $\SU(2)_k, k=1,2,\cdots$.  One reason is the conjectured relations between $\SU(2)_2, \SU(2)_4$, and $\SU(2)_3$ and the potential non-abelian statistics in fractional quantum Hall states with filling fractions $\nu=\frac{5}{2}, \frac{8}{3}$, and $\frac{12}{5}$ \cite{RR}.  By the level-rank duality, this sequence is essentially $\SU(k)_2$---the $A$-series at level two.  In this paper, we will focus on the $B$-series at level two---$\SO(m)_2$ for an odd integer $m\geq 3$.  (The level-rank duality for $SO(m)_2$ is more involved, in particular $SO(m)_2$ is not the same as $SO(2)_m$).  We will call any UMC with the same fusion rules of $SO(m)_2$ for some odd integer $m\geq 3$ a {\it metaplectic} modular category.  Our choice of {\it metaplectic} is motivated by the connection of such UMCs and the metaplectic representations of the finite symplectic groups $Sp(2n, F_m)$ when $m$ is a prime $p$.

There are several reasons to be interested in metaplectic modular categories.  One is their conjectured property $F$ and related explicit locality, while the other is the solutions that they provided for the generalized Yang-Baxter equations \cite{RW,KW}.  But our interest mainly comes from their potential realization in condensed matter systems. With this application in mind, we study problems inspired by their application to quantum computing, in particular analyzing their potential for topological quantum computing.  In the companion paper \cite{HNW}, we addressed their relevance to some proposed physical systems.

Let $X$ be a simple object in a UMC $\cC$.  We call $X$ {\it non-abelian} if its quantum dimension $d_X>1$ because $X$ will model a non-abelian anyon in physics; otherwise, $X$ is abelian, and then $d_X=1$.  We are mainly interested in non-abelian simple objects.
Associated to each simple object $X$ in a UMC $\cC$ is a unitary braid group representation $\rho_X$ and an isotopy invariant $\cI_X(L)$ of framed links $L$ \cite{turaev}.  Modulo subtleties of encoding, the representation matrices of $\rho_X$ can be used as quantum circuits for an anyonic quantum computing model $\cA_X$ \cite{Wang}.  We are interested in the BQP-completeness of such anyonic quantum computing models.  Then the density of the  braid group representations $\rho_X$, the $\#P$-hardness of evaluating $\cI_X(L)$, and the BQP-completeness of $\cA_X$ are all closely related.

For the non-abelian simple objects $X$ of quantum dimensions $d_X=2cos(\frac{\pi}{k+2})$ in $\SU(2)_k$, the density, the $\#P$-hardness, and the BQP-completeness all match perfectly \cite{FLW1, FLW2}.  Non-abelian simple objects with all three properties will be called {\it strong}, and otherwise called {\it weak}.  Note that abelian simple objects are always weak in this sense.  The weakest non-abelian object will have finite image braid group representation, polynomial time algorithm for the evaluation of the associated link invariant, and a classically efficiently simulable computing model $\cA_X$ possibly after a localization in the sense in \cite{RW}.  In the sequence $\SU(2)_k$, non-abelian simple objects of quantum dimensions $d_X=2cos(\frac{\pi}{k+2})$ are weak if and only if $k=2,4$.  Note that the case $k=1$ is abelian.  When $k=2$, this is the Ising anyon $\sigma$, the finiteness of the braid group representation, polynomial time computable link invariant, and classically simulable computing model are all known \cite{jones83,jones87}.  So the Ising anyon is weak in all three aspects, where the classical simulation of the model $\cA_{\sigma}$ follows from the Knill-Gottesmann simulation of Clifford circuits (See e.g. \cite{NC}).  For level $k=4$, the finiteness of braid images and polynomial computable invariants are also known \cite{jones87}.  In this paper, we show that efficient classical simulation of braidings is also possible after a localization.  Therefore, a natural question is if it is possible for a non-abelian anyon to be weak in only one or two aspects.

We systematically study non-abelian simple objects in the metaplectic modular categories $SO(m)_2$.  We prove that the simple objects with quantum dimensions $\sqrt{m}$ in $SO(m)_2$ are weak in all three aspects, but the simple objects of quantum dimension $2$ in $JK_{r=6}$ give us a surprising result:  while their braid group representations have finite images and are efficiently classically simulable after a generalized localization, their link invariants are $\#P$-hard to evaluate exactly.  Actually we prove that any sufficiently accurate approximation of their associated link invariants is $\#P$-hard.  For strong anyons, similar hardness result for approximation was obtained in \cite{kuperberg}.

The contents of the paper are as follows.  In Section $2$, the basic data for metaplectic modular categories are listed.  In Section $3$, we analyze  the braid group representations associated to the quantum dimension $2$ simple objects.  It follows from our analysis that their braid group representations have finite images.  In Section $4$, we provide classical efficient simulation of the Gaussian braid representations and the braid representations from \cite{KW}.  These simulations can be regarded as qudit generalizations of the Knill-Gottesmann theorem.  In Section $5$, we prove that any sufficiently accurate approximation of the link invariant associated with the quantum dimension $2$ simple objects is already $\#P$-hard.

\section{Metaplectic Modular Categories}

One systematic way to construct UMCs is via the representation theory of quantum groups at a particular root of unity \cite{turaev}.  For each simple Lie algebra $\g$ and an integer $k\geq 1$, called the level of the theory,  two UMCs can be constructed from the representation theory of $U_q(\g)$, where $q=e^{\pm \frac{\pi \im}{l}}, l=m_{\g}(k+\ch_{\g})$, where $\ch_{\g}$ is the dual Coxeter number of $\g$ and $m_{\g}=1$ for $A,D,E$, $2$ for $B,C, F_4$, and $3$ for $G_2$.  For $\SU(2)$,  $\ch_{\su(2)}=2$ and for $\SO(m)$, $\ch_{\so(m)}=m-2$.  The two UMCs for $q=e^{\pm \frac{\pi \im}{l}}$ are conjugates of each other, so for definiteness $G_k$ denotes the category from $q=e^{\frac{\pi \im}{l}}$, where $G$ is the simple Lie group with Lie algebra $\g$.

\subsection{$\SO(m)_2$ UMCs}

The UMC $\SO(m)_2$ is of rank $r+4$, where $m=2r+1, r\geq 1$.   Their simple object representatives will be denoted as $\Irr\{\SO(m)_2\}=\{\one, Z, X_{\epsilon}, X'_{\epsilon}, Y_j, 1\leq j \leq r\}$.  In \cite{NR}, $\one$ is denoted as $X_0$, $Z$ as $X_{2\lambda_1}$, and  $Y_j$ as $X_{\gamma^j}$.
Their quantum dimensions are $d_\one=d_Z=1, d_{X_{\epsilon}}=d_{X'_{\epsilon}}=\sqrt{m}, d_{Y_j}=2, 1\leq j \leq r$.

All fusion rules of those UMCs can be deduced from the ones listed below.  In the following, we also denote $\one$ by $Y_0$ sometimes.

\begin{enumerate}

\item $X_\e\otimes X_\e\cong \oplus_{j=0}^{j=r}Y_{j}$

\item $X_\e\otimes Y_{j}\cong X_\e\oplus X_\e', 1\leq j \leq r$

\item $X_\e\otimes X_\e'\cong  Z\oplus \oplus_{j=1}^{j=r}Y_{j}$

\item $Z \otimes X_\e=X_\e'$

\item $Z \otimes Z=\one$

\item $Z \otimes Y_{j}=Y_j, 1\leq j \leq r$

\item $Y_j\otimes Y_j=\one \oplus Z\oplus Y_{\textrm{min}\{2j, m-2j\}}, 1\leq j \leq r$

\item $Y_i\otimes Y_j=Y_{|i-j|}\oplus Y_{\textrm{min}\{i+j, m-i-j\}}, 1\leq i,j \leq r, i\neq j$.

\end{enumerate}


 Note that the fusion rules for the sub-category consisting of $\{\one,Z, Y_j, 1\leq j\leq r\}$ are exactly the same as those of the representations of the dihedral group $D_m$ of order $2m$.

The twist or topological spin of a simple object $X$ is of the form $\theta_X=e^{2\pi \im h_X}$, where the rational number $h_X$, only defined modulo $1$, is called the scaling dimension of $X$.  For $SO(m)_2$, we have:

$$\; h_Z=1,\; h_{X_\e}=\frac{r}{8},\; h_{X_\e'}=\frac{r+4}{8},\; h_{Y_j}=\frac{j(m-j)}{2m}, 0\leq j\leq r.$$

Some braidings for $\SO(m)_2$ are as follow:

\begin{equation}\label{data}
\begin{array}{c} R^{Y_1,Y_1}_{\mathbf{1}}=e^{\pi \im (m+1)/m}, R^{Y_1,Y_1}_{Z}=e^{\pi \im /m}, R^{Y_1,Y_1}_{Y_2}=e^{\pi \im (m-1)/m}\\
R^{X_\e, X_\e}_{Y_j}={\im}^{(r-j)(r-j+1)-j} e^{\pi \im (\frac{r}{4} +\frac{j^2}{4r+2})}, j=0, 1,2,..,r.
\end{array}
\end{equation}

\subsection{Metaplectic modular categories}

\begin{defn}

Let $\cC$ be a UMC, and $X$ be a simple object.

\begin{enumerate}

\item $\cC$ is metaplectic if its fusion rules are the same as those of $SO(m)_2$ for some odd integer $m\geq 3$.

\item $X$ is called non-abelian if its quantum dimension $d_X>1$; otherwise, $X$ is called abelian, and then $d_X=1$

\item An abelian simple object is called a boson if its twist $\theta_X=1$, a fermion if $\theta_X=-1$, and a semion if $\theta_X=\pm \im$.

\end{enumerate}

\end{defn}

We will use the same notation for simple objects in $SO(m)_2$ as for all metaplectic modular categories.
The fusion of an abelian simple object with other simple objects only permutates them.  From the above fusion rules, we see that the simple object $Z$ of any metaplectic modular category is a boson.  A complete classification of all metaplectic modular categories is open.  Work towards a classification is obtained in \cite{TWen,RWen}.

When $m=3$, metaplectic modular categories are of rank=$5$.  In \cite{BNRW}, all rank=$5$ UMCs are classified.  In particular, there are $4$ metaplectic modular categories for $m=3$:  the $SU(2)_4$ and its conjugate, the Jones-Kauffman theory at $A=ie^{-\frac{2\pi i}{24}}$, denoted as $JK_{r=6}$, and its conjugate. The $SU(2)_4$ and the $JK_{r=6}$ are distinguished by the Frobenius-Schur indicator of $X_\e$: $-1$ for $SU(2)_4$ and $1$ for $JK_{r=6}$.

The full data of $SU(2)_4$ and $JK_{r=6}$ can be found in \cite{Bonderson} and \cite{KauffmanLins}, respectively.  While the $F$-matrices in \cite{Bonderson} are unitary, the $F$-matrices in \cite{KauffmanLins} need to be renormalized as in \cite{Wang} to become unitary.

Each simple object in a metaplectic modular category has an integral squared quantum dimension.  For braid group representations, the afforded representations by $X_\e'$ and $X_\e$ differ only by a one-dimension representation, so we will only discuss the one from $X_\e$.  Also due to the similarities among $Y_i$'s, we will focus only on $Y_1$.
The equivalence of the braid representation of $X_\e$ in $SO(m)_2$ and the Gaussian representation below is proven in \cite{RWen}.

\section{Extra-special $p$-groups and metaplectic representations}

\subsection{Braid group representations}

Suppose $\cC$ is a UMC, and $\Irr\{\cC\}=\{X_i\}_{i\in I}$ is a set of simple object representatives, i.e. one from each isomorphism class of simple objects.  Then each simple object $X$ in $\cC$ gives rise to a unitary representation $\rho_X=\{\rho_{X,n}\}$ of the braid group $\B=\{\B_n\}$.  Such representations for the $n$-strand braid group $\B_n$ can be conveniently described using an algebra: $A_{n,X}=\Hom(X^{\ot n},X^{\ot n})$.  The algebra $A_{n,X}$ is an analogue of a regular representation for a finite group, and $A_{n,X}\cong \oplus_{i\in I}\End(V_{X^{\ot n},X_i})$, where $V_{X^{\ot n},X_i}=\Hom(X^{\ot n},X_i)$.  The representation $V_{X^{\ot n},X_i}$ of $\B_n$, denoted also as $\rho_{X,n}$, can be conveniently described using graphical calculus.  An orthonormal basis of $V_{X^{\ot n},X_i}$ is given by admissible labelings of any connected uni-trivalent tree \cite{Wang}.

In \cite{RW, GHR}, localization and generalized localization are studied for a simple object $X$ whose squared quantum dimension is an integer.  A (generalized) localization of the braid group representation $\rho_X$ is a single unitary (generalized) $R$-matrix $R_X$ such that the natural braid group representation from $R_X$, denoted as $\rho_{R_X}$, is equivalent (as a representation) to a braid representation $\tilde{\rho}_X$ constructed from $\rho_X$ as follows: decomposing each representation $\rho_{X,n}$ of the $n$-strand braid group $\B_n$ into its irreducible summands:  $\rho_{X,n}=\oplus_{j}\rho_{X,n,j}$, then $\tilde{\rho}_{X,n}=\oplus_{j}m_{X,n,j} \rho_{X,n,j}$ for some non-zero multiplicities $m_{X,n,j}$.

The localizing qudit representation $\rho_{R_X}$ has a natural tensor product structure, similar to the quantum circuit model.  Therefore, we will refer to a localizing representation $\rho_{R_X}$ of a simple object $X$ as its {\it qudit representation} associated to $R_X$.

For each such object $X$ localized by a (generalized) $R$-matrix $R_X$, we have two braid group representations $\rho_X$ and $\rho_{R_X}$, which have the same irreducible summands.  The question that we are interested in is:

\vspace{.1in}

{\it For each $n$, is the image $\rho_X(\B_n)$, or equivalently $\rho_{R_X}(\B_n)$,  a finite group?  If so, which finite group?}

\subsection{Finiteness for the representation $\rho_{X_\e}$}

\subsubsection{Extra-special $p$-groups}

Let $X_i$ (or $Z_i$) be the Pauli matrices $\sigma_X$ (or $\sigma_Z$) acting on the $i$-th qubit of the $n$-qubits $(\C^2)^{\ot n}$.
The {\it real} Pauli group $\cP_n$ on $n$-qubits is the group generated by all such Pauli gates.  (The complex Pauli groups are slightly more complicated.)  As an abstract group, it is an extra-special $2$-group isomorphic to the central product of $n$-copies of the dihedral group $D_4$ of order $8$.  Its outer automorphism group $\textrm{Out}(\cP_n)\cong {\textrm{Aut}(\cP_n)}/{\textrm{Inn}(\cP_n)}$ is an orthogonal group $O^{\pm}(2n,\F_2)$ over the field $\F_2$ \cite{Griess}.  The definition of the orthogonal groups $O^{\pm}(2n,\F_2)$ does not use the usual bilinear forms, which would lead to the symplectic group $\Sp(2n,\F_2)$.  Instead they are defined using quadratic forms over the field $\F_2$.  Over the $\F_2$-vector space $\F_2^{2n}$, there are two inequivalent non-degenerate quadratic forms with maximal isotropic subspaces of dimensions $n$ and $n-1$, respectively.  The orthogonal groups $O^{\pm}(2n,\F_2)$ are the linear transformations of $\F_2^{2n}$ that preserve these two quadratic forms, respectively.  For the Pauli group $\cP_n$, the quotient ${\cP_n}/{Z(\cP_n)}$ is $\F_2^{2n}$ with a natural quadratic form $q$: $q(v)=v^2\in Z(\cP_n)\cong \F_2, v\in \F_2^{2n}$.

The Clifford group $\cC_n$ on $n$-qubits is the normalizers of the Pauli group $\cP_n$ in the unitary group $\U(2^n)$ up to sign.  The two groups fit into the following short exact sequence:

$$1\rightarrow \cP_n \rightarrow \cC_n \rightarrow \Sp(2n,\F_2) \rightarrow 1,$$

where $\Sp(2n,\F_2)$ is the symplectic group of the $\F_2$-vector space $\F_2^{2n}$.

For an odd integer $m\geq 3$, let $E_{m,n-1}^z$ be the group generated by
$u_1,\ldots,u_{n-1}$ with relations:
\begin{eqnarray}
 u_i^m&=&1\label{es1}\\
u_iu_{i+1}&=&z u_{i+1}u_i\label{es2}\\
u_iu_j&=&u_ju_i, \quad |i-j|>1\label{es3},
\end{eqnarray}
where $z$ is a central element of order $m$.

When $m$ is an odd prime $p$ and $n>1$ is odd, then $E_{p,n-1}^z$ is an extra-special $p$-group of exponent $p$.  In physics, $E_{p,n-1}^z$ is often called the finite Heisenberg group.  Then the outer automorphism group $Out_I(E_{p,n-1}^z)$ of $E_{p,n-1}^z$ (the subscript $_I$ of $Out_I$ means that we consider only automorphisms that are the identity on the center of $E_{p,n-1}^z$) is $\Sp(n-1, \F_p)$ \cite{Winter}.   We have another split short exact sequence:

$$1\rightarrow E_{p,n-1}^z \rightarrow \cC_{p,n-1} \rightarrow \Sp(n-1,\F_p) \rightarrow 1.$$

Therefore, the extra-special $p$-group $E_{p,n-1}^z$ can be regarded as the $p$-generalization of the Pauli group $\cP_{\frac{n-1}{2}}$, while the semi-direct product of the extra-special $p$-group by the symplectic group $\Sp(n-1, \F_p)$, denoted as $\cC_{p,n-1}$,  can be regarded as the generalization of the Clifford group $\cC_{\frac{n-1}{2}}$.

A similar interpretation can be made for $n$ even.  In this case, the center of $E_{p,n-1}^z$ is $\Z_p\oplus \Z_p$ rather than $\Z_p$ (the extra $\Z_p$ is generated by $u_1u_3\cdots u_{n-1}$.)  We call this group an almost extra-special $p$-group, though this terminology is not standard.  Below we will see that those groups naturally arise as images of braid group representations.

\subsubsection{Gaussian representations}

Let $\om$ be a primitive $m$-th root of unity and
consider the $\C$-group algebras $ES(\om,n-1)=\C[E_{m,n-1}^z]$, where $z=\om^{-2}$.
 Notice that $ES(\om,n-1)$ has dimension $m^{n-1}$ and is semisimple.  Therefore, $ES(\om,n-1)\cong \oplus_i M_{n_i}(\C)$, where $M_{n_i}(\C)$ is the full $n_i\times n_i$ matrix algebra and $\sum_i n_i^2=m^{n-1}$.

The Gaussian representation \cite{jones89} $\gamma:\B_n\rightarrow ES(\om,n-1)$ is defined on
braid generators of $\B_{n}$ by
$$\gamma(\sigma_i)=\frac{1}{\sqrt{m}} \sum_{j=0}^{m-1}\om^{j^2}u_i^j.$$

Direct computation shows that this leads to a unitary representation of the braid group. To match the representation $\rho_{X_\e}$ of $SO(m)_2$ exactly, we need to add a phase factor $e^{-\frac{\pi i(m^2-m)}{8}}$ to $\gamma(\sigma_i)$.  Since this phase factor is not important for our discussion below, we will omit it for convenience.

There is another representation of $\B_n$ in $ES(\om,n-1)$:

$$ \rho(\sigma_i)=(\frac{t+1}{m}\sum_{j=0}^{m-1}u_i^j)-1,$$
where $t+t^{-1}+2=m$.
This representation is usually called the Potts representation.  While for $m=3$, the Potts and Gaussian representations coincide, they  differ for $m\geq 5$.

\subsubsection{Metaplectic representations}

It was shown in \cite{GJo}
that the images of the Gaussian braid group representations are finite groups.
In fact, for $n$ odd the analysis in \cite{GJo} shows that, projectively,
$\gamma(\B_n)$ is isomorphic to the finite simple group
$\PSp(n-1,\mathbb{F}_p)$ when $m$ is a prime $p$.  More carefully, we have:

\begin{thm}

When $m$ is a prime $p$, the images ${\gamma}(\B_n)$ of the Gaussian representations are:
\begin{enumerate}

\item when $n$ is odd, then ${\gamma}(\B_n)/Z({\gamma}(\B_n))\cong \Sp(n-1, \bF_p)$, where the center $Z({\gamma}(\B_n))\cong \Z_4$ if $p=3$ mod $4$, and $\Z_2\oplus \Z_2$ if $p=1$ mod $4$.

\item when $n$ is even, then  ${\gamma}(\B_n)$ fits into the following exact sequence:

$$ 1\rightarrow E_{p,n}^z \rightarrow {\gamma}(\B_n) \rightarrow \Sp(n-2, \bF_p)\rightarrow 1.$$

\end{enumerate}

\end{thm}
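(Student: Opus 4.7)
My plan is to realize $\gamma(\B_n)$ as an extension of a symplectic group over $\F_p$ by the (almost) extra-special $p$-group $E_{p,n-1}^z$, and then pin down the scalar center using quadratic Gauss sums. The starting observation is that $\gamma(\sigma_i)=\frac{1}{\sqrt{p}}\sum_j\omega^{j^2}u_i^j$ sits in the unit group of the algebra $ES(\omega,n-1)=\C[E_{p,n-1}^z]$, so its conjugation action on the subgroup $E_{p,n-1}^z$ is explicitly computable.

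A direct Gauss-sum manipulation, using only the nontrivial relation $u_iu_{i+1}=\omega^{-2}u_{i+1}u_i$, shows that $\gamma(\sigma_i)$ commutes with $u_i$ and with every $u_j$ for $|j-i|>1$, and sends $u_{i\pm 1}$ to a central multiple (in $\langle z\rangle$) of $u_i^{\pm 1}u_{i\pm 1}$. Passing to the quotient $E_{p,n-1}^z/Z(E_{p,n-1}^z)$, which is a symplectic $\F_p$-vector space of dimension $n-1$ when $n$ is odd and $n-2$ when $n$ is even (with form induced by commutators), each $\gamma(\sigma_i)$ acts as a transvection along the class of $u_i$. This produces a homomorphism $\pi:\gamma(\B_n)\to\Sp(n-1,\F_p)$, respectively $\Sp(n-2,\F_p)$, which is surjective by the classical fact that transvections along a chain of consecutive symplectic basis vectors generate $\Sp(2k,\F_p)$. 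For the kernel, one checks $E_{p,n-1}^z\subset\gamma(\B_n)$ by Fourier inversion of the Gauss-sum formula, recovering each $u_i$ from powers of $\gamma(\sigma_i)$. For $n$ even, the larger center $\Z_p\oplus\Z_p$ of $E_{p,n-1}^z$ (generated by $z$ and $u_1u_3\cdots u_{n-1}$) absorbs the phase scalars of $\gamma(\B_n)$, yielding the exact sequence in part (2); for $n$ odd, the center of $E_{p,n-1}^z$ is just $\langle z\rangle$, and after quotienting by the scalar center of $\gamma(\B_n)$ one lands in $\Sp(n-1,\F_p)$.

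The main obstacle is identifying the isomorphism class of the scalar center $Z(\gamma(\B_n))$ in the $n$-odd case. I would isolate two independent central scalars, one coming from the image of the full twist $\Delta_n^2\in Z(\B_n)$ and another from an appropriate power such as $\gamma(\sigma_1)^{2p}$, and evaluate each in terms of the quadratic Gauss sum $G_p=\sum_{j=0}^{p-1}\omega^{j^2}$. The classical identity $G_p^2=\left(\tfrac{-1}{p}\right)p$ then splits the two cases cleanly: for $p\equiv 1\pmod 4$, $G_p$ is real, both central generators turn out to be involutions, and $Z(\gamma(\B_n))\cong\Z_2\oplus\Z_2$; for $p\equiv 3\pmod 4$, $G_p$ is purely imaginary, one generator becomes a primitive fourth root of unity, and $Z(\gamma(\B_n))\cong\Z_4$. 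Tracking the $p^{-1/2}$ normalizations and intrinsic Gauss-sum phases through the braid relations in order to extract exactly these two generators, without over- or under-counting scalars already absorbed into $\langle z\rangle$, is where most of the care is needed.
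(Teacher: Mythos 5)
The paper does not actually prove this theorem: it quotes it from Goldschmidt--Jones \cite{GJo} and only records the conjugation formulas for $u_i$ under $\gamma(\sigma_{i\pm1})$, which are exactly your first step. Your outline of the ``upper half'' of the argument --- conjugation induces symplectic transvections on $E^z_{p,n-1}/Z$, these generate the symplectic group, and the Gauss-sum identity $G_p^2=\bigl(\tfrac{-1}{p}\bigr)p$ governs the $\Z_4$ versus $\Z_2\oplus\Z_2$ dichotomy --- is the right skeleton and consistent with the cited source.

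Your kernel step, however, is wrong, and wrong in a way that contradicts part (1) of the very statement you are proving. You claim $E^z_{p,n-1}\subset\gamma(\B_n)$, ``recovering each $u_i$ from powers of $\gamma(\sigma_i)$.'' First, the mechanism fails: $\gamma(\sigma_i)$ lies in the commutative subalgebra $\C[u_i]$, and on the $\omega^k$-eigenspace of $u_i$ it acts by the scalar $p^{-1/2}G_p\,\omega^{-\bar{4}k^2}$ (with $\bar{4}=4^{-1}\bmod p$); every power of $\gamma(\sigma_i)$ therefore carries a purely quadratic phase in $k$ and can never reproduce the linear phase $\omega^{k}$ of $u_i$ --- indeed $\gamma(\sigma_i)^{p}$ is already a scalar. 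Second, the conclusion itself is false for $n$ odd: part (1) asserts that $\gamma(\B_n)$ is a central extension of $\Sp(n-1,\F_p)$ by a group of order $4$, so it cannot contain a subgroup of order $p^{n}$. The whole point of the odd/even dichotomy is that for $n$ odd the extraspecial group has a unique irreducible representation, on which $\gamma(\B_n)$ acts only through (a lift of) its outer automorphisms --- the metaplectic/Weil representation of $\Sp(n-1,\F_p)$ --- whereas only for $n$ even does a Heisenberg group actually sit inside the image, producing the non-central kernel in part (2). To repair the proof you must treat the parities separately: for $n$ odd, show the kernel of $\gamma(\B_n)\to\Sp(n-1,\F_p)$ consists of scalars and evaluate those scalars via Gauss sums; for $n$ even, exhibit explicit braid words (commutators of $\gamma(\sigma_i)$ with conjugates of neighboring generators, not powers of a single generator) whose images generate the extraspecial kernel.
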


For the braid group representation associated to the simple objects $X_\e$ of $SO(m)_2$, let $H_{n,X_\e}$ denote the images $\rho_{X_\e,n}(\B_n)$ of the $\B_n$ from the object $X_\e$.  Then for $m$ a prime $p$, $H_{n,X_\e}\cong {\gamma}(\B_n)$ for all $p$ \cite{RWen}.

A uniform way to understand the images ${\gamma}(\B_n)$ is to treat the short exact sequence for $n$ even as the definition of the symplectic group $\Sp(n-1, \bF_p)$.  The image of the braid group in the Gaussian representation
can be understood as follows. We can verify that:
\begin{eqnarray}\label{Conj-ui}
\left[\gamma(\sigma_{i+1})\right]^\dagger {u_i}\, \gamma(\sigma_{i+1}) &=& \omega^{-1} u_{i+1} u_i\cr
\left[\gamma(\sigma_{i-1})\right]^\dagger {u_i}\, \gamma(\sigma_{i-1}) &=& \omega u^{-1}_{i-1} u_i\cr
\left[\gamma(\sigma_{i})\right]^\dagger {u_i}\, \gamma(\sigma_{i}) &=& u_i\cr
\left[\gamma(\sigma_{j})\right]^\dagger {u_i}\, \gamma(\sigma_{j}) &=& u_i \, , \,\,|i-j|>1
\end{eqnarray}
Braiding transformations
are, therefore, automorphisms of $E_{p,n}^z/Z(E_{p,n}^z)$.
Hence, the image of the braid group is a subgroup
of the group of automorphisms of $E_{p,n}^z/Z(E_{p,n}^z)$.
This is equal to the {\it metaplectic representation} \cite{GJo} of
$Sp(n-1,\mathbb{F}_p)$.

\subsection{Finiteness for the qubit representation $\rho_{R_{Y_1}}$}\label{Qudit}

Let $R_{Y_1}$ be the following $8\times 8$ matrix, which is the block sum of two $4\times 4$ matrices:

$$\begin{pmatrix}
\nu \cosm &0&\im \sinm &0\\
0&-\im \sinm &0&\cosm \\
\im \sinm &0&\nu \cosm &0\\
0&\cosm &0&-\im \sinm
\end{pmatrix} \oplus
\begin{pmatrix}
-\im \sinm &0&\cosm&0\\
0&\nu \cosm &0&\im \sinm \\
\cosm &0&-\im \sinm &0\\
0&\im \sinm &0&\nu \cosm
\end{pmatrix},$$
where $\nu=-1$ if $m=3$ and $\nu=1$ if $m\geq 5$.

For the braid generator $\sigma_i\in \B_n$, set
$$\rho_{R_{Y_1}}({\sigma_i})=I^{\ot{(i-1)}} \otimes R_{Y} \otimes I^{\ot{(n-i-1)}},$$
then $\rho_{R_{Y_1}}$ is a representation of $\B_n$ on the $(n+1)$-qubit $(\C^2)^{\ot (n+1)}$.  We will call this representation the {\it qubit representation}.  This representation is a {\it generalized} localization of the $\rho_{Y_1}$ for $Y_1$ of the $JK_{r=6}$ theory and its generalizations to $m\geq 5$ \cite{RW}.  We do not know if there are localizations of $\rho_{Y_1}$'s that are not generalized.

By the computational basis of $n$-qudits $(\C^m)^{\ot n}$, we mean the basis consists of the tensor products $\{|j_1\rangle \ot |j_2\rangle \ot \cdots \ot |j_n\rangle, 0\leq j_1,j_2,\cdots, j_n\leq m-1\}$ of the standard basis $\{|j\rangle,j=0,..,m-1\}$ of $\C^m$ in Dirac notation.  When $m=2$, we have a one-to-one correspondence between the computational basis of $n$-qubits and the $n$-bit strings $|x_1\cdots x_n\rangle, x_i\in \{0,1\}$.  We will also denote the action of the Pauli matrices $\sigma_x$ and $\sigma_z$ on the $i$-th qubit as $X_i$ and $Z_i$, respectively, i.e. $X_i$ and $Z_i$ is a tensor product of the Pauli matrices $\sigma_x$ and $\sigma_z$ on the $i$th factor with the identity on the other tensor factors.

Let $\Lambda^2_{\rm{XOR}}\rm{NOT}$ be the XOR-controlled $3$-qubit gate defined on basis $|x_1 x_2 x_3\rangle$:

${\xornot}(|x_1 x_2 x_3\rangle)=|x_1 x_2 x_3\rangle$ if ${\rm{XOR}}(x_1,x_3)=0$ and

${\xornot}(|x_1 x_2 x_3\rangle)=|x_1 \rm{NOT}(x_2) x_3\rangle$ if ${\rm{XOR}}(x_1,x_3)=1$,

where ${\rm{XOR}}(x_1,x_3)=x_1+x_3 \; mod \; 2, \; {\rm{and}}\; {\rm{NOT}}(x_i)=1-x_i.$

Set $U_{i-1,i,i+1}=\rho_{R_{Y}}({\sigma_i})$, $H_i=Z_{i-1}X_iZ_{i+1}$ if $m\geq 5$ and $H_i=X_i$ if $m=3$, $V_i=e^{\frac{\pi \im }{m} H_i}$, and $\NOTi= \xornot_{i-1,i,i+1}$.

The index $i\in \{1,\cdots, n+1\}$ as there are $(n+1)$-qubits.  The qubit encoding
uses $X_\e$ (or $X_\e'$) particles at the two ends of the trivalent tree, and $(n+1)$ $Y_1$-particles in the middle as depicted here.  We show the case where both ends are $X_\e$:

\vspace{.2in}
\begin{center}
\begin{picture}(250,30)
\label{pic:qubit-encoding}
\put(2,8){$ X$}
\put(30,30){$Y_1$}
\put(57,30){$Y_1$}
\put(85,30){$Y_1$}
\put(112,30){$Y_1$}
\put(46,0){$x_1$}
\put(72,0){$x_2$}
\put(100,0){$x_3$}
\put(124,0){$x_4$}
\put(160,0){$x_{n-1}$}
\put(190,0){$x_{n}$}
\put(18,10){\line(1,0){115}}
\put(34,10){\line(0,1){15}}
\put(62,10){\line(0,1){15}}
\put(90,10){\line(0,1){15}}
\put(118,10){\line(0,1){15}}
\put(140,10){$\dots$}
\put(158,10){\line(1,0){65}}
\put(179,10){\line(0,1){15}}
\put(207,10){\line(0,1){15}}
\put(178,30){$Y_1$}
\put(206,30){$Y_1$}
\put(226,8){$ X$}
\end{picture}
\end{center}
where ${x_i}=X_\e$ or $X_\e'$.
We set $Z_0=Z_{n+2}=+1$, and note that $\text{NOT}_i^{-+} \equiv I$ if $Z_{i-1} Z_{i+1}=1$ and
$\text{NOT}_i^{-+} \equiv X_i$ if $Z_{i-1} Z_{i+1}=-1$.
When the proof of a following lemma is a direct computation, we will simply omit it.

\begin{lemma}
\begin{equation}
U_{i-1,i,i+1}=V_i \cdot \NOTi,
\end{equation}
\end{lemma}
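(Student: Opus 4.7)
The plan is a direct verification by decomposing the three-qubit space $(\C^2)^{\otimes 3}$ on which both sides act into the joint eigenspaces of the commuting pair $(Z_{i-1}, Z_{i+1})$. Write $W_{a,b} = \ker(Z_{i-1} - (-1)^a)\cap\ker(Z_{i+1} - (-1)^b)$ for $(a,b)\in\F_2^2$, so that $(\C^2)^{\otimes 3} = \bigoplus_{a,b} W_{a,b}$ with each summand two-dimensional and spanned by $|a\,0\,b\rangle, |a\,1\,b\rangle$.

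First I would check that both sides preserve this decomposition. For the right-hand side this is clear: $\NOTi$ only modifies the $i$-th qubit (the decision whether to flip depends on $x_{i-1}, x_{i+1}$), and $H_i$ in either form ($X_i$ for $m=3$, $Z_{i-1}X_iZ_{i+1}$ for $m\ge 5$) manifestly commutes with $Z_{i-1}$ and $Z_{i+1}$, hence so does $V_i = e^{i\pi H_i/m}$. For the left-hand side, the stated matrix $R_{Y_1}$ is visibly block-diagonal with two $4\times 4$ blocks, and each $4\times 4$ block decomposes further into two $2\times 2$ blocks upon collecting the odd- and even-indexed rows and columns; matching these four $2\times 2$ blocks of $R_{Y_1}$ to the four subspaces $W_{a,b}$ in the conventional ordering $|x_{i-1}x_ix_{i+1}\rangle$ is the bookkeeping step.

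Next, on each $W_{a,b}$ I would compute both sides in the basis $|a\,0\,b\rangle, |a\,1\,b\rangle$ and compare. On the XOR$=0$ sectors ($W_{0,0}$ and $W_{1,1}$), $\NOTi$ is the identity and $H_i$ acts as $X_i$ (directly for $m=3$, and since $Z_{i-1}Z_{i+1}=+1$ for $m\ge 5$), so $V_i\cdot\NOTi = \cosm\, I + \im\sinm\, X_i$, to be compared with the XOR$=0$ sub-block of $R_{Y_1}$. On the XOR$=1$ sectors ($W_{0,1}$ and $W_{1,0}$), $\NOTi$ acts as $X_i$ and $H_i$ acts as $\pm X_i$ (with $-$ for $m\ge 5$ coming from $Z_{i-1}Z_{i+1}=-1$, and $+$ for $m=3$), so $V_i\cdot\NOTi = \cosm\, X_i \pm \im\sinm\, I$, which I would then read off as the XOR$=1$ sub-block of $R_{Y_1}$. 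In each case the correspondence fixes the value of $\nu$ automatically.

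The main obstacle is entirely clerical: setting up the bijection between the four $2\times 2$ sub-blocks of $R_{Y_1}$ (as they appear inside the two $4\times 4$ summands written in the paper's conventional basis order) and the four eigenspaces $W_{a,b}$, and keeping track of the two sign conventions so that the parameter $\nu$ comes out with its stated value in both the $m=3$ and $m\ge 5$ cases. Once this correspondence is fixed, each of the four sub-block identifications is a single line of trigonometry.
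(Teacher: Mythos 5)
Your proposal is correct and is essentially the paper's own (omitted) argument: the paper explicitly declines to write out this lemma's proof on the grounds that it is a direct computation, and your block decomposition by joint eigenspaces of $Z_{i-1}$ and $Z_{i+1}$ is exactly the right way to organize that computation, since both $\NOTi$ and $V_i$ act within each two-dimensional sector $W_{a,b}$ as $I$ or $X_i$ and as $e^{\pm \im\pi X_i/m}$ respectively. The only caution is the one you already flag as "clerical": the paper never states the basis ordering behind its two $4\times 4$ blocks, so the identification of its sub-blocks with the sectors $W_{a,b}$ (and hence the consistency of the sign $\nu$ in the $m=3$ case) must be pinned down before the four one-line trigonometric checks can be declared to close.
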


\begin{lemma}\label{Clifford}

The group generated by $\NOTi$ is a subgroup of the Clifford group, which will be denoted as $H$.

\end{lemma}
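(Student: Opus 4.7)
The plan is to express $\NOTi$ explicitly as a product of two CNOT gates and invoke the standard fact that CNOT belongs to the Clifford group $\cC_{n+1}$.

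First I would recall that for a CNOT gate with control qubit $c$ and target qubit $t$, the action on the computational basis is to flip $x_t$ if and only if $x_c=1$. Consequently, composing $\mathrm{CNOT}_{i-1,i}$ with $\mathrm{CNOT}_{i+1,i}$ flips the bit $x_i$ a total of $x_{i-1}+x_{i+1} \pmod 2$ times, which is precisely the rule $\mathrm{XOR}(x_{i-1},x_{i+1})$ defining $\NOTi$. Since both of these CNOTs share the same target, they commute, and one obtains the identity
\begin{equation}
\NOTi \;=\; \mathrm{CNOT}_{i-1,i}\cdot\mathrm{CNOT}_{i+1,i}.
\end{equation}
(As a quick sanity check, one may also rewrite $\NOTi=\frac12\bigl(I+Z_{i-1}Z_{i+1}+X_i-Z_{i-1}X_iZ_{i+1}\bigr)$ and verify that it agrees with the two cases in the definition.)

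Next I would invoke the well-known fact that each CNOT gate lies in the Clifford group: by direct computation, conjugation by $\mathrm{CNOT}_{c,t}$ sends each Pauli generator to a Pauli operator (for instance $X_c\mapsto X_cX_t$, $Z_t\mapsto Z_cZ_t$, and $X_t,Z_c$ are fixed), so $\mathrm{CNOT}_{c,t}$ normalizes the Pauli group. Because the Clifford group is closed under multiplication, each $\NOTi$ is therefore Clifford. The subgroup $H$ they generate is then contained in $\cC_{n+1}$, as claimed.

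There is essentially no main obstacle here; the content of the lemma is recognizing the ``XOR-controlled NOT'' as the well-known pattern of stacking two CNOTs with a common target, after which the conclusion is immediate from the standard generators of the Clifford group. The only point requiring mild care is bookkeeping the boundary convention $Z_0=Z_{n+2}=+1$, which simply means that $\NOT_1$ and $\NOT_{n+1}$ reduce to a single CNOT (with the missing ``control'' treated as $0$), and this causes no problem since a single CNOT is already Clifford.
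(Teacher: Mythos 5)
Your proof is correct: the identity $\NOTi=\mathrm{CNOT}_{i-1,i}\cdot\mathrm{CNOT}_{i+1,i}$ (reducing to a single CNOT at the boundary where $Z_0=Z_{n+2}=+1$) is easily verified on the computational basis, and since CNOT normalizes the Pauli group, membership in the Clifford group follows. The paper omits this proof as a ``direct computation,'' and your CNOT decomposition (together with the equivalent form $\NOTi=\tfrac12\bigl(I+Z_{i-1}Z_{i+1}+X_i-Z_{i-1}X_iZ_{i+1}\bigr)$, which matches the paper's own remark) is exactly the intended computation carried out cleanly.
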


\begin{lemma}\label{NOT-Conj}
\begin{eqnarray}
\Bigl(\NOT_i\Bigr)^\dagger H_i \NOT_i &=& H_i, \\ \nonumber
\Bigl(\NOT_i\Bigr)^\dagger H_{i+1} \NOT_i &=& H_i H_{i+1}, \\ \nonumber
\Bigl(\NOT_i\Bigr)^\dagger H_{i-1} \NOT_i &=& H_{i-1} H_i, \\ \nonumber
\end{eqnarray}
and
\begin{equation}
|i-j|>1 \; \rightarrow \; \Bigl(\NOT_i\Bigr)^\dagger H_j \NOT_i = H_j.
\end{equation}
\end{lemma}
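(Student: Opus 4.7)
The plan is to leverage the fact, established in Lemma \ref{Clifford}, that $\NOT_i$ lies in the Clifford group, so conjugation by $\NOT_i$ sends Pauli operators to (signed) Pauli operators. The proof then reduces to a qubit-by-qubit bookkeeping exercise, organized by first decomposing $\NOT_i$ into familiar CNOT primitives and then tracking its action on each Pauli factor of $H_j$.

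The first step is to write $\NOT_i$ explicitly as a product of two ordinary CNOT gates sharing the middle qubit as target,
$$
\NOT_i \;=\; \mathrm{CNOT}_{i-1,i}\cdot\mathrm{CNOT}_{i+1,i},
$$
which is immediate from the XOR-controlled rule $x_i\mapsto x_i\oplus x_{i-1}\oplus x_{i+1}$. These two CNOTs commute since they share the target qubit $i$, and each is an involution, so $\NOT_i^\dagger=\NOT_i$. The relevant Heisenberg-picture conjugation rules are then the standard CNOT ones: control-$Z$ and target-$X$ are fixed, a control-$X$ picks up a target-$X$, and a target-$Z$ picks up a control-$Z$.

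Each of the four identities then falls out immediately. For $j=i$: every Pauli factor of $H_i=Z_{i-1}X_iZ_{i+1}$ sits either on a control qubit of one of the CNOTs (the $Z_{i\pm1}$) or on the common target (the $X_i$), and in each case the Pauli is a fixed point of the conjugation, so $H_i$ is invariant. For $|i-j|>1$: the supports of $H_j$ and $\NOT_i$ are disjoint, so conjugation is trivial. For $j=i+1$ (and symmetrically $j=i-1$): the target-$Z$ factor $Z_i$ of $H_{i+1}$ is transported by the two CNOTs to $Z_{i-1}Z_iZ_{i+1}$, the control-$X$ factor $X_{i+1}$ is transported by $\mathrm{CNOT}_{i+1,i}$ to $X_iX_{i+1}$, and $Z_{i+2}$ is untouched; multiplying these three pieces and regrouping by qubit produces precisely $H_iH_{i+1}$. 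The $m=3$ case, where $H_i=X_i$, is strictly simpler and reduces to the single rule that a control-$X$ propagates to its target.

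The main care needed is sign bookkeeping when a target-$Z$ and a target-$X$ land on the same qubit and combine into $\pm\im Y$; one must verify that the phases accumulated on the two sides of each claimed identity agree. With the CNOT propagation rules in hand, each of the four identities in the lemma reduces to a short qubit-by-qubit check of this sort.
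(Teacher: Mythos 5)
Your overall strategy---writing $\NOTi=\mathrm{CNOT}_{i-1,i}\cdot\mathrm{CNOT}_{i+1,i}$ and propagating each Pauli factor of $H_j$ through the two CNOTs---is a clean way to organize the ``direct computation'' that the paper omits, and it correctly disposes of the cases $j=i$, $m=3$, and $|i-j|>1$. (One small imprecision there: for $|i-j|=2$ and $m\geq 5$ the supports are \emph{not} disjoint, since $H_{i\pm 2}$ carries a $Z$ on qubit $i\pm 1$; that factor is fixed because it sits on a control, not because it is untouched.)

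The gap is exactly at the step you flag and then skip: the phase bookkeeping for $j=i\pm1$ when $m\geq 5$. Your propagation gives
\begin{equation*}
\Bigl(\NOTi\Bigr)^\dagger H_{i+1}\,\NOTi=(Z_{i-1}Z_iZ_{i+1})(X_iX_{i+1})Z_{i+2},
\end{equation*}
and ``regrouping by qubit'' to reach $H_iH_{i+1}=Z_{i-1}(X_iZ_i)(Z_{i+1}X_{i+1})Z_{i+2}$ requires commuting $Z_i$ past $X_i$ on qubit $i$, which costs a factor $-1$. So the computation you outline actually yields $-H_iH_{i+1}$ (and likewise $-H_{i-1}H_i$), not $H_iH_{i+1}$; a one-line check confirms this, e.g.\ on four qubits $\bigl(\NOT^{-,+}_2\bigr)^\dagger H_3\,\NOT^{-,+}_2|0000\rangle=+|0110\rangle$ while $H_2H_3|0000\rangle=-|0110\rangle$. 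Asserting that the three transported pieces ``produce precisely $H_iH_{i+1}$'' is therefore unjustified as written. To close the argument you must either carry the sign explicitly and state the identity up to the central factor $-1$ (which is all the subsequent theorem needs, since $-1$ is among the generators of the abelian group $G$), or justify why the literal equality should hold under some other convention for the product $H_iH_{i+1}$; as it stands, the one nontrivial step of the lemma is the one left unverified.
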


\begin{definition}
Define for $k\leq l$
\begin{equation}
S_{k,l}=\prod_{k \leq j \leq l} H_j.
\end{equation}
\end{definition}

\begin{lemma}
\begin{eqnarray}
i=k-1 & \quad \rightarrow \quad & \Bigl(\NOT_i\Bigr)^\dagger S_{k,l} \NOT_i=S_{k-1,l},
\\ \nonumber
i=k \; {\rm and} \; k<l  & \quad \rightarrow \quad & \Bigl(\NOT_i\Bigr)^\dagger S_{k,l} \NOT_i=S_{k+1,l},
\\ \nonumber
i=l \; {\rm and} \; k<l  & \quad \rightarrow \quad & \Bigl(\NOT_i\Bigr)^\dagger S_{k,l} \NOT_i=S_{k,l-1}
\\ \nonumber
i=l+1 & \quad \rightarrow \quad & \Bigl(\NOT_i\Bigr)^\dagger S_{k,l} \NOT_i=S_{k,l+1},
\\ \nonumber
{\rm Otherwise} &\quad \rightarrow \quad &
\Bigl(\NOT_i\Bigr)^\dagger S_{k,l} \NOT_i = S_{k,l}.
\end{eqnarray}
\end{lemma}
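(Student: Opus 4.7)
The plan is to prove the lemma by distributing conjugation by $\NOT_i$ over the product defining $S_{k,l}$ and applying Lemma~\ref{NOT-Conj} factor by factor. Two preliminary facts about the $H_j$'s make this work: they pairwise commute, and each squares to the identity. When $m=3$ both are immediate since $H_j=X_j$. When $m\geq 5$ and $H_j=Z_{j-1}X_jZ_{j+1}$, commutation of $H_j$ and $H_{j+1}$ follows because the two overlapping sites each contribute an anticommutation and the total parity is even; $H_j$ and $H_{j+2}$ overlap only at a site where both carry $Z$, hence commute; and farther-separated pairs have disjoint support. Each $H_j^2$ collapses to the identity site by site. Commutativity then lets me write
\begin{equation}
\bigl(\NOT_i\bigr)^\dagger S_{k,l}\,\NOT_i=\prod_{k\leq j\leq l}\bigl(\NOT_i\bigr)^\dagger H_j\,\NOT_i,
\end{equation}
with the original order preserved.

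I would then dispatch the cases in turn using Lemma~\ref{NOT-Conj}. For $i=k-1$, only the $j=k=i+1$ factor is touched and becomes $H_{k-1}H_k$, giving $S_{k-1,l}$. For $i=k<l$, the $j=k$ factor is fixed while $H_{k+1}\mapsto H_kH_{k+1}$, so the product is $H_k\cdot H_kH_{k+1}\cdot H_{k+2}\cdots H_l=S_{k+1,l}$ after collapsing $H_k^2=I$. The cases $i=l$ with $k<l$ and $i=l+1$ are mirror images. In the Otherwise case I would split into two subcases: either $i\notin[k-1,l+1]$ or $i=k=l$, so no factor is touched and $S_{k,l}$ is fixed; or $k<i<l$, in which case the three factors $H_{i-1}, H_i, H_{i+1}$ all transform and their combined effect is $H_{i-1}H_i\cdot H_i\cdot H_iH_{i+1}=H_{i-1}H_i^3H_{i+1}=H_{i-1}H_iH_{i+1}$, again recovering $S_{k,l}$.

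The only step requiring real care is this interior subcase $k<i<l$ of Otherwise: it is not called out separately in the lemma statement, and its triviality depends on the full package of commutativity plus $H_i^3=H_i$. Everything else is bookkeeping, tracking which factors of $S_{k,l}$ are modified by a single conjugation and reading off the relevant line of Lemma~\ref{NOT-Conj}.
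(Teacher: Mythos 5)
Your proposal is correct and is exactly the ``direct computation'' that the paper omits (it declares just before Lemma 3.4 that such proofs will not be written out): conjugate each factor $H_j$ of $S_{k,l}$ using Lemma~\ref{NOT-Conj}, note that only the factors with $|i-j|\leq 1$ are affected, and cancel repeated $H_i$'s via $H_i^2=I$. Your case analysis, including the interior subcase $k<i<l$ where $H_{i-1}H_i^3H_{i+1}=H_{i-1}H_iH_{i+1}$, is complete and sound (the commutativity of the $H_j$'s, while true, is not even needed since the modified factors are adjacent in the product).
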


\begin{thm}

For each $n\geq 2$, the image $\rho_{R_{Y_1}}(\B_n)$ of the $n$-strand braid group $\B_n$ is a finite group $G\rtimes H$, where $G\cong \Z_2\times \Z_m^{n+1}$ and $H$ is the subgroup in Lemma \ref{Clifford}.

\end{thm}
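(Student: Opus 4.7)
The plan exploits the factorization $\rho_{R_{Y_1}}(\sigma_i) = U_i = V_i \NOT_i$ from the first Lemma of this subsection, together with the key observation that $V_i$ and $\NOT_i$ commute. This follows from $(\NOT_i)^\dagger H_i \NOT_i = H_i$ (Lemma~\ref{NOT-Conj}) and $V_i = e^{i\pi H_i/m}$. Define $G$ to be the subgroup of the image generated by the $V_i$'s together with all their conjugates under the group $H$ of Lemma~\ref{Clifford}. The strategy is to show (i) $G$ is abelian and normal in the image; (ii) each $U_i$ reduces to $\NOT_i$ modulo $G$, so the quotient surjects onto $H$; (iii) the sequence $1 \to G \to \mathrm{Image} \to H \to 1$ splits, giving $\mathrm{Image} \cong G \rtimes H$; and (iv) $G \cong \Z_2 \times \Z_m^{n+1}$.

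For (i), the $H_i$'s commute pairwise: adjacent $H_i, H_{i+1}$ overlap at two sites with anticommuting Pauli factors (giving overall commutation), while non-adjacent pairs share at most a single commuting factor. By Lemma~\ref{NOT-Conj} and the final $S_{k,l}$-lemma, the $H$-conjugates of $V_i$ are of the form $e^{i\pi S_{k,l}/m}$ with $S_{k,l}$ a product of commuting $H_j$'s, so $G$ is an abelian group of commuting exponentials. Normality follows from $U_i G U_i^{-1} = V_i(\NOT_i G (\NOT_i)^{-1}) V_i^{-1} = V_i G V_i^{-1} = G$, using $V_i \in G$ and abelianness; (ii) is then automatic. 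For (iii), compute $U_i^m = V_i^m (\NOT_i)^m = (-I)\NOT_i = -\NOT_i$ using $V_i^m = -I$ (from $H_i^2 = I$ and the Euler expansion $V_i = \cos(\pi/m) I + i \sin(\pi/m) H_i$), $(\NOT_i)^2 = I$, and $m$ odd. Since $-I = V_i^m \in G \subseteq \mathrm{Image}$, we conclude $\NOT_i \in \mathrm{Image}$, hence $H \subseteq \mathrm{Image}$. Finally $G \cap H = \{I\}$ because elements of $H$ are products of the $\{0,1\}$-entry permutation matrices $\NOT_i$, while every non-identity element of $G$ has entries involving $\cos(k\pi/m)$ and $\sin(k\pi/m)$ with $k \not\equiv 0 \pmod m$ (or $-1$ in the case of $-I$), none of which lie in $\{0,1\}$.

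For (iv), the element $-I = V_i^m$ generates a central $\Z_2$, and in $G/\langle -I\rangle$ the generators $V_i^2 = e^{2\pi i H_i/m}$ have order $m$. Iterating the conjugation rules $(\NOT_{l+1})^\dagger S_{k,l} \NOT_{l+1} = S_{k,l+1}$ and $(\NOT_{k-1})^\dagger S_{k,l} \NOT_{k-1} = S_{k-1,l}$ (with boundary conventions $Z_0 = Z_{n+2} = I$) produces enough commuting Pauli exponentials that $G/\langle -I\rangle$ has $\Z_m$-rank exactly $n+1$, verified by a simultaneous-diagonalization count over the joint eigenbasis of the reachable $S_{k,l}$'s. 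Since $m$ is odd, the $\Z_2$ splits off, yielding $G \cong \Z_2 \times \Z_m^{n+1}$. The main obstacle is precisely this rank computation in (iv): carefully enumerating the $\NOT_i$-conjugation orbit of $\{V_1, \ldots, V_{n-1}\}$ and confirming that its $\Z_m$-rank is exactly $n+1$, with particular care for the boundary qubits where $Z_0$ or $Z_{n+2}$ act trivially.
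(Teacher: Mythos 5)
Your route is the paper's route: factor $\rho_{R_{Y_1}}(\sigma_i)=U_i=V_i\cdot\NOTi$ into commuting abelian and Clifford parts, push the abelian part to one side using the conjugation lemmas, and present the image inside (abelian normal subgroup)$\,\rtimes H$. Steps (i) and (ii) are sound and, by taking $G$ to be the $H$-conjugation closure of the $V_i$, you are actually more careful about normality than a naive reading of the factorization would be. However, two steps do not go through as written.

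First, (iii) is circular. You deduce $\NOTi\in\mathrm{Image}$ from ``$-I=V_i^m\in G\subseteq\mathrm{Image}$,'' but $V_i\in\mathrm{Image}$ is exactly what is not yet known: the only identity producing $V_i$ from the actual generators is $V_i=U_i\cdot\NOTi$, which presupposes $\NOTi\in\mathrm{Image}$. What the generators give directly is $U_i^m=-\NOTi$, $U_i^{m+1}=-V_i$ and $U_i^2=V_i^2$; since $m$ is odd, $-I$ is \emph{not} an element of the cyclic group $\langle U_i\rangle\cong\Z_{2m}$ (its elements are $V_i^k\,\NOTi^{\,k}$, none of which is $-I$). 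So producing $-I$, and hence $H\subseteq\mathrm{Image}$, needs a genuine argument involving words in several generators; without it you only obtain that the image is a subgroup of $G\rtimes H$ (which still yields finiteness). Second, and more seriously, the rank claim in (iv) --- which you rightly flag as the crux but do not carry out --- fails for the $G$ you defined. Iterating the $S_{k,l}$-conjugation lemma from $H_i=S_{i,i}$ shows the $H$-conjugation closure of $\{V_i\}$ contains $e^{\im\pi S_{k,l}/m}$ for \emph{every} $1\le k\le l\le n+1$. The operators $H_1,\dots,H_{n+1}$ are independent commuting Pauli strings (each carries its $X$ on a distinct qubit), so all $2^{n+1}$ joint sign patterns $\epsilon=(\epsilon_1,\dots,\epsilon_{n+1})$ occur in the representation space, and the eigenvalue of $S_{k,l}$ at $\epsilon$ is the character $\prod_{j=k}^{l}\epsilon_j$. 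These $\binom{n+2}{2}$ characters are linearly independent, so a product $\prod_{k\le l} e^{\im\pi c_{kl}S_{k,l}/m}$ is the identity only if every $c_{kl}\equiv 0 \pmod{m}$ (using that $m$ is odd). Hence the abelian group generated by all the $e^{\im\pi S_{k,l}/m}$ has $\Z_m$-rank $\binom{n+2}{2}$, not $n+1$: your conjugation-closed $G$ cannot be $\Z_2\times\Z_m^{n+1}$. You must either work with the smaller non-normal group $\langle -1, V_1,V_2,\dots\rangle$ (for which the order count does give $\Z_2\times\Z_m^{\#\{V_i\}}$, at the cost of the semidirect-product structure), or accept a larger normal abelian subgroup; the ``simultaneous-diagonalization count'' you propose would, if performed, surface exactly this obstruction.
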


\begin{proof}

The image group $\rho_{R_{Y_1}}(\B_n)$ is a subgroup of the group generated by the generators $\NOTi$ and $V_i$.
Note that
\begin{equation}
[V_i,V_j]=0
\end{equation}
for all $i,j$ and similarly
\begin{equation}
[H_i,H_j]=0
\end{equation}
for all $i,j$.
Consider a word $W$ in this (possibly larger) group consisting of a product of generators $\NOTi$ and $V_i$.    Our strategy is to commute the $V_i$'s to the right until the word is brought into a form of a product of $\NOTi$ generators followed by a product of $V_i$ generators (all $V_i$ generators appear on the right).

Note that $\NOTi$ is in the Clifford group.  So, it conjugates $Z_i X_{i+1} Z_{i+2}$ to another product of Pauli gates.
The matrix $\NOTi$ commutes with $H_i, H_{i+2}$, and $H_{i-2}$.  Also note that $\NOTi$ trivially commutes with $H_j$ for $|i-j|>2$.  By Lemma \ref{NOT-Conj}, $\NOTi$ conjugates a product of $H_j$ to some other product of $H_k$.
It follows that $\NOTi$ conjugates an exponential of a product of $H_j$ to an exponential of a product of $H_k$.
Therefore, any word $W$ can be written as a product of $\NOTi$ followed by
an exponential of a sum of terms, each term being of the form $\im l \frac{\pi}{m}$ multiplied by a product of $H_j$'s, where $l$ is some integer.

The group generated
by the operators $V_i$'s is an abelian group, which we
call $G$. Since $V_i^{2m}=1$, we can write an arbitrary element
of the group as $\prod_j V_j^{k_j}$, where the $k_j\in \{0, ..., 2m - 1\}$, so the group is a subgroup of $\Z^n_{2
m}$. However,
since $V_i^m= -1$, there are only $2 m^n$ distinct group
elements which can be written as $(\pm 1 ) \prod_j V_j^{k_j}$, where
the $k_j\in \{0, ...,m- 1\}$. This group is in
fact $\Z_2\times \Z^n_{m}$, and the generators of the group can be taken
to be $-V_i$ and $-1$. The group generated by the operators $\NOTi$ 
is a subgroup of the Clifford group; call this group
$H$. Then, because conjugation by $\NOTi$
defines an automorphism
of $G$, the group generated by $V_i$ and $\NOTi$ is the
semi-direct product $G \rtimes H$.

\end{proof}

The finiteness of braid group images for the representations $\rho_{Y_i}$ also can be deduced from \cite{NR}.  But the result in \cite{NR} does not give information on what the group is.  However, even our approach here does not give the {\it complete} information on the group.

\section{Generalized Knill-Gottesmann Theorems}

In this section, classical simulation of quantum circuits always refers to {\it efficient} classical simulation.

\subsection{Classical simulation of braid group representations}

Efficient classical simulation of Clifford circuits is provided by the Knill-Gottesman theorem (see e.g. \cite{NC}).  In the context of anyonic quantum computation, it implies that braiding quantum circuits based on the Ising anyon $\sigma$ can be efficiently simulated classically.

Fix a qudit $\C^m, m\geq 3$, the state space for $n$-qudits is $(\C^m)^{\ot n}$.  A quantum circuit model consists of a few fixed unitary matrices $\{g_i\}$, called {\it a gate set}, and all quantum circuits on $n$-qudits based on this gate set for all possible $n\geq 1$.  A {\it quantum circuit} on $n$-qudit is a composition of finitely many gates in the sense that gates should always be extended by tensoring an appropriate identity matrix if necessary.

Given a $R$-matrix $R_X: \C^m \ot \C^m \longrightarrow \C^m \ot \C^m$, by assigning $$I^{\otimes (i-1)}\otimes R_X \otimes I^{\otimes (n-i-1)}$$ to the braid generator $\sigma_i, i=1,\cdots, n-1$, we obtain a braid group representation $\rho_{R_X}$ of $\B_n$.  Naturally, we may regard this as a quantum circuit model with the gate set $\{\rho_{R_X}(\sigma_i^{\pm})\},i=1,2,\cdots, n-1.$  Then for each braid $\sigma \in \B_n$, $\rho_{R_X}(\sigma)$ is a braiding quantum circuit on $n$-qudits $(\C^m)^{\ot n}$ for all $n\geq 2$.  Note there are no $1$-qudit gates.

\begin{conj}
If a unitary (generalized) $R$-matrix $R_X$ is of finite order, then braiding quantum circuits can always be simulated classically.
\end{conj}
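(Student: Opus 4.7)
The plan is to mimic the structural result proved for the $Y_1$ qubit representation in Section~3 and reduce the simulation problem to an instance of a qudit Knill--Gottesmann theorem. Let $N$ denote the order of $R_X$. Every matrix entry of each $\rho_{R_X}(\sigma_i)$ then lies in the cyclotomic ring $\Z[\omega_N]$ with $\omega_N = e^{2\pi \im / N}$, and every eigenvalue of $\rho_{R_X}(\sigma_i)$ is an $N$-th root of unity. Arithmetic in $\Z[\omega_N]$ can be performed exactly in time polynomial in the bit length of its inputs, so all bookkeeping can be kept exact.

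First I would try to exhibit a finite abelian ``generalized Pauli'' subgroup $G \subset \U(m^n)$ and a finite group $H \subset \U(m^n)$ normalizing $G$, such that each $\rho_{R_X}(\sigma_i)$ lies in the semidirect product $G \rtimes H$. The hope is that the finite order of $R_X$, together with the Yang--Baxter relation and the locality of its action, force a factorization $\rho_{R_X}(\sigma_i) = D_i \cdot P_i$ with $D_i \in G$ and $P_i \in H$, in precise analogy with the decomposition $U_{i-1,i,i+1} = V_i \cdot \NOTi$ used for the qubit representation of $Y_1$.

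With such a structure in place, the second step is an appeal to a qudit generalization of the Knill--Gottesmann theorem: since conjugation by each $P_i$ yields a computable automorphism of $G$, one tracks the stabilizer of a product input state through the braid word in time polynomial in $n$ and in the length of the word, and probabilities of computational-basis measurements on $G$-stabilizer states are read off classically using polynomially many arithmetic operations in $\Z[\omega_N]$. This step should proceed in essentially the same way as the analogous simulation built in Section~4 for the metaplectic examples.

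The principal obstacle is the first step. Finite order together with Yang--Baxter is a priori too weak to enforce a clean factorization: finite-order unitary Yang--Baxter operators form a large family, and not every such $R$-matrix need normalize a nontrivial abelian subgroup. I would therefore expect a proof to proceed under the additional hypothesis that $R_X$ is a (generalized) localization of $\rho_X$ for some simple object $X$ in a UMC with integer squared quantum dimension, where Property~$F$ and structural theorems for the image of $\rho_X$ furnish the needed abelian-by-finite decomposition. An alternative, case-by-case route would classify finite-order unitary solutions of the Yang--Baxter equation in small local dimension and verify the conjecture for each; but in all approaches the genuinely new work is the algebraic identification of $G$ and $H$, after which the simulation argument is already in hand.
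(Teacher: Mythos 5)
There is a genuine gap, and you have in effect named it yourself. The statement you are asked about is stated in the paper as a \emph{conjecture}, and the paper does not prove it in this generality: it only establishes the two special cases $\rho_{R_{X_\e}}$ and $\rho_{R_{Y_1}}$, and in each case the simulability rests on an explicitly exhibited algebraic structure (the extra-special group $E^z_{p,n}$ with its generating sets for the Gaussian localization, and the decomposition $U_{i-1,i,i+1}=V_i\cdot \NOTi$ giving the image inside $G\rtimes H$ with $G\cong\Z_2\times\Z_m^{n+1}$ and $H$ inside the Clifford group for $Y_1$). Nothing in the paper derives such a structure from the hypothesis ``$R_X$ has finite order,'' and your first step --- that finite order, the Yang--Baxter relation, and locality ``force'' a factorization $\rho_{R_X}(\sigma_i)=D_iP_i$ with $D_i$ in a generalized Pauli group $G$ and $P_i$ in a finite group normalizing $G$ --- is asserted as a hope, not argued. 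This is precisely the new mathematical content the conjecture calls for, and it is far from automatic: finite order of each local generator does not even imply that the image $\rho_{R_X}(\B_n)$ is finite (two finite-order unitaries can generate an infinite, even dense, subgroup), and even a finite image need not contain a distinguished normal abelian ``Pauli-like'' subgroup with efficiently computable conjugation data. The paper's own Section~5 is a cautionary example of how delicate the finite-image situation is: the $Y_1$ theory has finite braid images yet $\#P$-hard exact link invariants, so no soft ``finite implies easy'' argument can be expected to work without identifying concrete structure.

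Your second step (once a $G\rtimes H$ structure with computable conjugation action is in hand, run a qudit stabilizer/Heisenberg-picture simulation, tracking group elements or stabilizers with exact cyclotomic arithmetic) does match the paper's simulation arguments in Section~4 and is fine as far as it goes. But your fallback --- adding the hypotheses that $R_X$ is a (generalized) localization of a simple object with integer squared quantum dimension satisfying Property~$F$, or doing a case-by-case classification in small dimension --- changes the statement being proved rather than closing the gap. As written, the proposal is a program whose decisive step is missing, which is consistent with the fact that the source itself leaves this as an open conjecture and proves only the metaplectic instances.
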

In this section, we will prove that this indeed is the case for the localizing qudit representation ${\rho}_{R_{X_\epsilon}}$, and also the generalized localizing qubit representation $\rho_{R_{Y_1}}$.

\subsubsection{Ising anyon $\sigma$}

The Ising anyon $\sigma$ can be localized by the following $R$-matrix \cite{FRW}, which consists of the Bell states.

$$R=\frac{1}{\sqrt{2}}\begin{pmatrix} 1&0&0& 1\\
 0 & 1&-1&0\\
 0&1&1 &0\\
 -1&0&0&1
 \end{pmatrix}.$$

As a corollary of the Knill-Gottesman theorem, any braiding quantum circuits from this $R$-matrix can be simulated classically.

\subsubsection{Localizing the Gaussian representation}

We want to find a unitary $m^2\times m^2$ matrix $U$ so that $U_i:=Id^{\ot i-1}\ot
U\ot Id^{\ot n-i-1}$ satisfy (\ref{es1}-\ref{es3}).  For this let
$\{\bfe_i\}_{i=1}^m$ denote the standard basis for $\C^m$ and define
\begin{equation}\label{Udef}
U(\bfe_i\ot\bfe_j)=\om^{i-j}\bfe_{i+1}\ot\bfe_{j+1}
\end{equation}
where the indices on $\bfe_{i}$ are to be taken modulo $m$.
It is straightforward to check that $U^m=I$ and $U^\dagger=U^{-1}$.  It is clear that $U_i$ and $U_j$ commute if $|i-j|>1$.
It remains to check (\ref{es2}).  For this it is enough to
consider $i=1$:
\begin{eqnarray*}
 &&U_1U_2(\bfe_i\ot\bfe_j\ot\bfe_k)=\om^{i-k-1}\bfe_{i+1}\ot\bfe_{j+2}\ot\bfe_{k+1}=\\
&&\om^{-2}\om^{i-k+1}\bfe_{i+1}\ot\bfe_{j+2}\ot\bfe_{k+1}=\om^{-2}
U_2U_1(\bfe_i\ot\bfe_j\ot\bfe_k)
\end{eqnarray*}
so $u_i\rightarrow U_i$ does give a representation of $ES(\om,n-1)$.  A standard
trace argument shows that this representation is faithful.  Thus defining
$$R_{X_\e}=\frac{1}{\sqrt{m}}\sum_{j=0}^{m-1}\om^{j^2}U^j$$ gives an $R_{X_\e}$-matrix localizing the
Gaussian representation, therefore the braid group representation $\rho_{X_\e,n}$ \cite{RW, RWen}.  The resulting braid group representation is the {\it qudit representation} $\rho_{X_\e}$, denoted as $\rho_{R_{X_\e}}$.

\subsubsection{Qudit representation $\rho_{R_{X_\e}}$}

For the qudit space $\C^m$, we denote its standard basis as $\{|j\rangle, 0 \leq j \leq  m-1\}$ as before.
On the $i$-th qudit $\C^m$, we define the \lq\lq shift" and \lq\lq clock" operators $X_i$ and $Z_i$.  The \lq\lq shift operator" $X_i$ is the permutation matrix $X_i |j\rangle=|j+1\rangle$, and the \lq\lq clock operator" $Z_i$ is the diagonal matrix $Z_i|j\rangle=\omega^{j}|j\rangle, 0 \leq j \leq  m-1$.  When $m=2$, $X_i, Z_i$ are the Pauli matrices.

The qudit representation is a direct sum of many sectors with multiplicities.  Each sector has a graphical calculus using the following tree, where 
the two horizontal ends of the trivalent tree are labeled by two anyons $Y_L, Y_R$ in $\{Y_i, i=0,1,...,r\}, m=2r+1$:

\vspace{.2in}
\begin{center}
\begin{picture}(250,30)
\label{pic:qubit-encoding}
\put(2,8){$ Y_L$}
\put(30,30){$X_\e$}
\put(57,30){$X_\e$}
\put(85,30){$X_\e$}
\put(112,30){$X_\e$}
\put(46,0){$x_1$}
\put(72,0){$x_2$}
\put(100,0){$x_3$}
\put(124,0){$x_4$}
\put(160,0){$x_{n-1}$}
\put(190,0){$x_{n}$}
\put(18,10){\line(1,0){115}}
\put(34,10){\line(0,1){15}}
\put(62,10){\line(0,1){15}}
\put(90,10){\line(0,1){15}}
\put(118,10){\line(0,1){15}}
\put(140,10){$\dots$}
\put(158,10){\line(1,0){65}}
\put(179,10){\line(0,1){15}}
\put(207,10){\line(0,1){15}}
\put(178,30){$X_\e$}
\put(206,30){$X_\e$}
\put(226,8){$Y_R$}
\end{picture}
\end{center}

Now we define stabilizer formalism, the meaning of classically efficiently simulable in the sense of the stabilizer formalism, and what do we mean by stabilizer measurements are classically efficiently simulable in the stabilizer formalism.

\begin{definition}\label{stabilizerformalism}

Given a sequence of Hilbert spaces ${\mathcal H}$ which are $N$-fold tensor products of a qudit $\C^D$.
\begin{itemize}
\item[1.] A {\it stabilizer group} is a subgroup of the unitary group $U(D)$ whose order is at most $O(1)$ in $N$. Usually, the order of the stabilizer group is a constant such as $D^2$ in the Pauli matrices for $D=2$.

\item[2.] A {\it stabilizer S} is an operator acting on ${\mathcal H}$ which is the tensor product of $N$ operators chosen from the stabilizer group.  The operators in this product may be distinct.

\item[3.] Given a finite set of pairs $(S_i,\omega_i)$, where $S_i$ is a stabilizer and $\omega_i$ a complex number.  
A nonzero vector $\phi$ is {\it stabilized} by that set if $S_i \phi = \omega_i \phi$ for all pairs $(S_i,\omega_i)$, i.e. $\phi$ is a common eigenvector of all stabilizer operators with the specified eigenvalues.

\item[3.] A {\it complete set of stabilizers} is a set of $N$ pairs $(S,\omega)$ such that
there is a unique vector (unique up to phase and overall normalization) $\phi$ such that $\phi$ is stabilized by that set.
Such a vector $\phi$ is called a {\it stabilizer state}.
\end{itemize}

Given a set $M$ of unitary matrices (unitaries) on each ${\mathcal H}$, the sequence of unitaries $M$ is said to be {\it classically efficiently simulable in the sense of the stabilizer formalism} if 

\begin{itemize}
\item[1.] There is a  stabilizer group such that
given any stabilizer $S$ and given any unitary $U\in M$, the operator $U S U^\dagger$ is a stabilizer.

\item[2.]  Given a stabilizer $S$ presented as a list of $N$ elements of the stabilizer group,
it is possible in polynomial time on a classical computer to compute $U S U^\dagger$ in the same presentation.
\end{itemize}
Note that item 1 means that, given any set of pairs $(S,\omega)$, and any vector $\phi$ stabilized by that set, then the set of pairs
$(U S U^\dagger,\omega)$ stabilizes the state $U \phi$.

We say that {\it stabilizer measurements are classically efficiently simulable in the stabilizer formalism} if 
given any complete set of stabilizers and given any other stabilizer $T$
\begin{itemize}
\item[1.] It is possible in probabilistic polynomial time on a classical computer to output a complex number $z$ such that the probability that $z=\omega$ is equal
\begin{equation}
\langle \phi, P(T;\omega) \phi \rangle,
\end{equation}
where $\phi$ is a state stabilized by that complete set of stabilizers with $|\phi|=1$, and
$P(T;\omega)$ denotes the projection onto the eigenspace of $T$ with eigenvalue $\omega$.

\item[2.] After outputing a given $z$, it is possible in polynomial time on a classical computer to output another complete 
set of stabilizers which stabilizes the state $P(T;\omega) \phi$.
\end{itemize}

\end{definition}

\begin{thm}

 When $m$ is a prime $p$, then all polynomial length braiding circuits of $X_\e$ anyons can be efficiently simulated classically in the sense of definition \ref{stabilizerformalism}.

\end{thm}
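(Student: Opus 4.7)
The plan is to adapt the Knill--Gottesman stabilizer formalism from qubits ($p=2$) to prime-$p$ qudits. Take the stabilizer group to be the generalized single-qudit Pauli group $\mathcal{P}=\langle \omega I, X, Z\rangle\subset U(p)$, where $X|j\rangle=|j+1 \bmod p\rangle$ and $Z|j\rangle=\omega^j|j\rangle$ with $\omega=e^{2\pi i/p}$. This has order $p^3=O(1)$, so an $N$-qudit stabilizer is a tensor product of $N$ elements of $\mathcal{P}$ and can be recorded in $O(N\log p)$ classical bits. The initial computational basis state $|0\rangle^{\otimes N}$ is jointly stabilized by $(Z_1,\ldots,Z_N)$ with eigenvalue $1$, giving a complete stabilizer set from which to start.

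The crucial step is to show that every $\rho_{R_{X_\e}}(\sigma_i)$ normalizes the $N$-qudit Pauli group. This is essentially the content of equations (\ref{Conj-ui}). From (\ref{Udef}) one checks directly that $U=(X\otimes X)(Z\otimes Z^{-1})$, so the localization $u_i\mapsto U_i$ embeds $E_{p,n-1}^z$ as a subgroup of the two-qudit Pauli group supported on sites $i,i+1$. Conjugating both sides of each relation in (\ref{Conj-ui}) under this faithful representation then expresses $\rho_{R_{X_\e}}(\sigma_j)^\dagger U_i \rho_{R_{X_\e}}(\sigma_j)$ as another element of the $N$-qudit Pauli group times an explicit power of $\omega$. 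Since the $U_i$'s together with an appropriate single-site generator generate (projectively) the whole Pauli group on $N$ qudits, the normalizer property follows.

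Polynomial-time simulability is then essentially immediate: because $\rho_{R_{X_\e}}(\sigma_i)$ acts on only qudits $i$ and $i+1$, conjugation of a tensor-product stabilizer changes at most two tensor factors, so each gate is processed in $O(1)$ classical time and a braid word of length $L$ in $O(L)$ time. For stabilizer measurements, the standard Gottesman--Aaronson argument generalizes: deciding whether a measured stabilizer $T$ commutes with the current generators reduces to a linear-algebra problem for a symplectic form on $\mathbb{F}_p^{2N}$, which is polynomial-time solvable by Gaussian elimination over the field $\mathbb{F}_p$; from this one samples the measurement outcome and updates the stabilizer list. The main obstacle is correct phase bookkeeping: for $p=2$ one tracks phases in $\{\pm 1\}$, but here $[X,Z]=\omega I$ has order $p$, so phases must be tracked in $\langle\omega\rangle\cong\mathbb{Z}_p$ throughout, and the exact $\omega$-multipliers produced by (\ref{Conj-ui}) after localization must be computed carefully. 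The primality of $p$ is essential precisely because $\mathbb{F}_p$ is then a field, which is what makes both the symplectic linear algebra for measurements and the discrete-log/phase updates for the conjugation action run in polynomial time.
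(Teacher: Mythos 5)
Your setup (generalized Pauli group on prime-$p$ qudits, the identification $U=(X\otimes X)(Z\otimes Z^{-1})$ from (\ref{Udef}), and tracking conjugation of stabilizers) is the right framework and matches the paper's. But there is a genuine gap at the crucial step. The relations (\ref{Conj-ui}) only tell you how $R_{X_\e}$ conjugates the operators $U_i$, i.e.\ only how it acts on the subgroup of the Pauli group generated by the images of the $u_i$'s. That subgroup is far too small: on $N$ qudits the $U_i$'s give only $N-1$ of the $2N$ independent directions in $\cP_N/Z(\cP_N)\cong\F_p^{2N}$, so "the $U_i$'s together with an appropriate single-site generator generate the whole Pauli group" is false as stated, and even if you enlarge the generating set appropriately, knowing the conjugation action on the $U_i$'s does not imply anything about the conjugation action on the extra generators. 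The inference "the normalizer property follows" is a non-sequitur: you must separately verify that $R_{X_\e}$ conjugates a single-site operator such as $Z_1$ back into the Pauli group, and this is exactly the nontrivial content of the theorem.

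That missing verification is where the specific Gaussian form $R_{X_\e}=\frac{1}{\sqrt p}\sum_{j=0}^{p-1}\omega^{j^2}U^j$ enters. Using $U^jZ_1=\omega^{-j}Z_1U^j$ and completing the square (which requires $2$ to be invertible mod $p$, i.e.\ $p$ odd, with $l=\frac{p+1}{2}$), one gets $R_{X_\e}Z_1R_{X_\e}^\dagger=\omega^{-l^2}Z_1U^l$, an element of the Pauli group up to phase. Combined with the observation that $R_{X_\e}$ commutes with everything commuting with $U$ (in particular with $X_1X_2$, $Z_1Z_2^\dagger$, $X_1Z_1$, $X_2Z_2^\dagger$), and that these operators together with $Z_1$ do generate the two-qudit Pauli group, this establishes the Clifford/normalizer property that your argument presupposes. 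Your remarks about phase bookkeeping and about $\F_p$ being a field for the measurement linear algebra are correct but secondary; the primality (really, oddness) of $p$ is needed first and foremost in the completing-the-square identity above. Once that computation is supplied, the rest of your simulation and measurement argument goes through.
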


\begin{proof}

There are $(r+1)^2$ different sectors.  Since braiding $X_\e$ anyons will not mix sectors, so they can be simulated simultaneously or individually.  The representation space of $\rho_{X_\e,n}$ is $\textrm{Hom}(Y_L\ot (X_\e)^{\ot n}, Y_R)$ with a basis consisting of eigenstates of 
products of $u_i$'s.  They are labeled trees in Fig. \ref{pic:qubit-encoding}.    
By equation (\ref{Conj-ui}), braiding $X_\e$ anyons
transforms products of $u_i$'s into products of $u_i$'s. As a result, the evolution of a state in $\textrm{Hom}(Y_L\ot (X_\e)^{\ot n}, Y_R)$ can be efficiently simulated classically by following
the evolution of these operators. 

The extra-special $p$-group $E^z_{p,2k}$ has another generating set \cite{HNW}:
\begin{eqnarray}
\label{eqn:Heisenberg-Z-X}
X_{i} X_{j} &=& X_{j} X_{i} \, ,\,\,  Z_{i} Z_{j} = Z_{j} Z_{i}\cr
X_{i} Z_{j} &=& \, z^{\delta_{ij}}\,Z_{j} X_{i}\cr
X_{i} z &=& z X_{i} \, ,\,\,  Z_{i} z = z Z_{i}
\end{eqnarray}
These two presentations of  $E^z_{p,2k}$ are related
by $u_{2i-1}=X_{i}$, $u_{2i}=Z_{i}Z^\dagger_{i+1}$ for $i\neq k$
and $u_{2k}=Z_{k}$.  We will include $E^z_{p,2k-1}$ inside $E^z_{p,2k}$.

The extra-special $p$-group $E^z_{p,2k}$ introduces redundant states for $X_\e$-anyons, which will be removed by the stabilizer formalism.  
Braidings commute with roughly half of the generators
of $E^z_{p,2k}$.   Note that $U_1, . . .U_{n-1}, \tilde{U}_1, . . . \tilde{U}_{n-1},X_1Z_1,X_nZ^{\dagger}_n, z$,
where $U_i=X_iX_{i+1}Z_iZ^{\dagger}_{i+1}, \tilde{U}_i = X_iX_{i+1}Z^{\dagger}_i�Z_{i+1}$ is another generating set of $E^z_{p,2k}$. The generators $\tilde{U}_i, X_1Z_1$, and
$X_nZ^{\dagger}_n$ all commute with the $U_i$'s and, therefore, with braiding.

It suffices to show that the two-qubit gates $R_{X_\e}$ conjugates each of the generators $X_1, X_2, Z_1, Z_2$ to a product of these generators, up to a phase.  A direct computation shows: 
\begin{equation}
U=X_1 X_2 Z_1 {Z_2}^{\dagger},
\end{equation}
then $U$ commutes with the operators $X_1 X_2$, $Z_1 Z_2^\dagger$, $X_1 Z_1$, and $X_2 Z_2^\dagger$ as may be checked.
So, all these $4$ operators are mapped to a product of the generators.
Note that these $4$ operators are not all independent: the product of the first three operators is equal to the fourth operator, up to a phase.

Note that the operators $Z_1,X_1X_2,Z_1 Z_2^\dagger,X_1 Z_1$ generate the image group.
Hence, it suffices to check that $Z_1$ is mapped to a product of generators by $R_{X_\e}$, up to phase.
Let $l=\frac{p+1}{2}$.
Note that
\begin{equation}
U^j Z_1 = \omega^{-j} Z_1 U^j.
\end{equation}

Then we have:
\begin{eqnarray}
&& R_{X_\e} Z_1 R_{X_\e}^\dagger  \\ \nonumber
&=& \xi \sum_{j=0}^{p-1} \omega^{j^2} U^j Z_1 R_{X_\e}^\dagger \\ \nonumber
&=& Z_1 \xi \sum_{j=0}^{p-1} \omega^{j^2-j} U^j R_{X_\e}^\dagger \\ \nonumber
&=& Z_1 \xi \sum_{j=0}^{p-1} \omega^{(j-l)^2-l^2} U^j R_{X_\e}^\dagger  \\ \nonumber
&=& Z_1 \xi \omega^{-l^2} \sum_{j=0}^{p-1} \omega^{(j-l)^2} U^j R_{X_\e}^\dagger  \\ \nonumber
&=& Z_1 \xi \omega^{-l^2} \sum_{j=0}^{p-1} \omega^{j^2} U^{j+l} R_{X_\e}^\dagger \\ \nonumber
&=& Z_1 \xi \omega^{-l^2} U^l \sum_{j=0}^{p-1} \omega^{j^2} U^j R_{X_\e}^\dagger \\ \nonumber
&=& Z_1 \omega^{-l^2} U^l R_{X_\e} R_{X_\e}^\dagger \\ \nonumber
&=& Z_1 \omega^{-l^2} U^l,
\end{eqnarray}
where $\xi=\frac{1}{\sqrt{m}}$.

Therefore, the evolution of $Z_1$ can be
efficiently simulated classically and, consequently, so can
the evolution of any state stabilized by products of $X_i$ and $Z_j$
operators. 
\end{proof}

For the standard quantum circuit model, measurements in the middle of computation can be all postponed to the end (see e.g. \cite{NC}).  But this is not the case for the anyonic computational model.  So we may ask if we can also simulate some measurements in the middle of a computation.  Measurements in a basis of products of $X_i$ and $Z_j$ operators can be simulated classically as in the Clifford circuit case. 
Thus, we conclude that we can efficiently simulate
classically any polynomial length quantum operation that consists of creating pairs of $X_\e$ 
anyons out of the vacuum, braiding them, and then measuring
them in a basis of products of $X_i$ and $Z_j$ operators.

More difficult measurements are projections of a pair of $X_\e$ anyons onto a definite charge.  While projecting onto the trivial charge can be simulated classically, we do not know if this is true for nontrivial charges.  This question seems to be open even in the Ising case.

When $m$ is not a prime, then the equivalence of the Gaussian representation and the braid group representation $\rho_{X_\e}$ is conjectured to be true, but not known.  Note that the exact same proof gives a classical simulation of the Gaussian representation.   

Instead of $u_i^{m}=1$, if we set $u_i^{2m}=1$, then the Gaussian representation and its localization are defined similarly.  The same proof above will give a classical simulation of the localized braid representation.

\subsection{Qubit representation $\rho_{R_{Y_1}}$}

If $R_Y: V^{\ot 3}\rightarrow V^{\ot 3}, V=\C^m$ is a solution to a $(2,3,1)$-gYBE, for the braid generator $\sigma_i\in \B_n$, by setting
$$R_{\sigma_i}=I^{\ot{(i-1)}} \otimes R \otimes I^{\ot{(n-i-1)}}, $$
we again have a braid group representation \cite{KW}.  But in this case, $\B_n$ acts on the vector space $V^{\ot (n+1)}$.
The above discussion about braiding quantum circuits applies to the generalized localizations as well.

We will use the same notation as in Section \ref{Qudit}.

\begin{thm}
There exists a description of elements of $G \rtimes H$ which uses only polynomial space on a classical computer, and there exists a classical algorithm to multiply elements of $G \rtimes H$ written in this description which uses only polynomial time. Consequently, the image of any polynomial length braid in $\rho_{Y_1}$ can be computed in this classical description in classical polynomial time.
\end{thm}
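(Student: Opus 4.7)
The plan is to encode an element $(g,h) \in G \rtimes H$ as a pair, using separate polynomial-size descriptions for its $G$- and $H$-parts. For the Clifford factor $h \in H$, I use the standard stabilizer (Heisenberg) tableau: record the signed Pauli operator obtained by conjugating each Pauli generator $X_j, Z_j$ of the $(n+1)$-qubit space by $h$. This takes $O(n^2)$ bits, and two standard routines on tableaux---composition to form $h_1 h_2$, and conjugation $h P h^{\dagger}$ of a Pauli word $P$---both run in $\operatorname{poly}(n)$ time. For the abelian factor $g \in G$, I use the presentation implicit in the preceding theorem: write $g = \epsilon \prod_j V_j^{k_j}$ with $\epsilon \in \{\pm 1\}$ and $k_j \in \{0,\ldots,m-1\}$, giving $O(n \log m)$ bits.

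Multiplication in the semidirect product, $(g_1,h_1)(g_2,h_2) = \bigl(g_1 \cdot h_1 g_2 h_1^{\dagger},\, h_1 h_2\bigr)$, decomposes into three polynomial-time subroutines: composing the two Clifford tableaux to obtain that of $h_1 h_2$; conjugating $g_2$ by $h_1$, which since $V_j = \exp(\im \pi H_j / m)$ with $H_j$ a Pauli reduces via
\[
h_1 V_j^{k_j} h_1^{\dagger} \;=\; \exp\!\left(\frac{\im \pi k_j}{m}\, h_1 H_j h_1^{\dagger}\right)
\]
to applying the tableau of $h_1$ to each $H_j$; and combining with $g_1$ by componentwise addition of exponents mod $m$ (mod $2$ for the sign). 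Given a braid word $\sigma = \sigma_{i_1}^{\varepsilon_1} \cdots \sigma_{i_L}^{\varepsilon_L}$ of length $L$, the factorization $\rho_{R_{Y_1}}(\sigma_i) = V_i \cdot \NOT_i$ established above realizes each $\sigma_i$ as an element $(V_i, \NOT_i) \in G \rtimes H$ with an $O(n)$-bit description, and iterating the multiplication routine $L$ times yields $\rho_{R_{Y_1}}(\sigma)$ in $O(L \cdot \operatorname{poly}(n, \log m))$ total time.

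The step I expect to be the main obstacle is the second subroutine. A priori, conjugating $V_j$ by a Clifford element $h_1$ produces $\exp(\im \pi P / m)$ for some Pauli $P = h_1 H_j h_1^{\dagger}$ which need not be of the form $H_{j'}$ for any $j'$, so re-expressing the result in the $\{V_{j'}\}$-basis of $G$ is not automatic. The rescue comes from the structure lemmas above: conjugation by any $\NOT_i^{-,+}$ permutes---up to an overall sign---the family $\{S_{k,l}\}_{1 \le k \le l}$ of contiguous products of $H_j$'s, a family of size $O(n^2)$. I therefore widen the description of $g$ to record an exponent in $\Z_{2m}$ against each $S_{k,l}$; this keeps the encoding polynomial in $n$ and $\log m$, and turns the conjugation step into a combinatorial lookup on intervals $[k,l]$ followed by integer additions. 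With this adjustment the algorithm is manifestly polynomial, giving both claims of the theorem.
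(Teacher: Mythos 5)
Your proposal is correct and follows essentially the same route as the paper: store the Clifford factor $H$ by its conjugation action on Pauli generators (a tableau), store the abelian factor $G$ by exponents in $\Z_{2m}$ attached to the $O(n^2)$ interval operators $S_{k,l}$, and multiply by commuting the Clifford part past the abelian part using the fact that conjugation by elements of $H$ permutes the family $\{S_{k,l}\}$. The only cosmetic difference is that the paper pushes generators through one at a time using the explicit single-$\NOTi$ permutation lemma, whereas you conjugate by the accumulated Clifford element via its tableau and then identify the resulting interval, which is the same argument in a slightly different order.
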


\begin{proof}

As explained above, we can simulate each sector independently.  
 In Section \ref{Qudit}, the image group is given as a subgroup of a semi-direct product of two groups $G\rtimes H$.  $G$ is an abelian group with generators $e^{i \frac{\pi i}{m} S_{k,l}}$ for each pair of integers $k,l$ with $1\leq k \leq l\leq n+1$.  Thus, the group $G$ has $O(n^2)$ generators and is finite order so we can write each group element by writing a sequence of $n^2$ integers in the range $\{0,...,2m-1\}$.

The other group $H$ is generated by the $\NOTi$ operators.  This is a subgroup of the Clifford group.  For a unitary $U$ in the Clifford group, let us compute $U X_i U^\dagger\; \textrm{and} \; U Z_i U^\dagger$ for all $i$.
If one knows these conjugations of Pauli operators, then one can compute $U O U^\dagger$ for any operator $O$ since any $O$ can be written as sums of products of the $X_i$ and $Z_i$, where $X_i$ and $Z_i$ are Pauli operators.  Since a unitary in the Clifford group conjugates Pauli operators to products of Pauli operators, we can store $U X_i U^\dagger$ and $U Z_i U^\dagger$ for all $i$ with polynomial resources.
This is how we will describe elements of the two groups $G$ and $H$.
Note that the set of all $U X_i U^\dagger$ and $U Z_i U^\dagger$ uniquely specifies $U$ up to an overall phase.  In the present case, the phase is uniquely determined by the fact that $U$ has all matrix elements real and non-negative in the computational basis.

The classical simulation is done as follows.  After some number of steps, we have a unitary $U$ stored as $AC, A\in G, C\in H$, where we store $A$ by storing the $n^2$ integers and we store $C$ by its action on the Pauli operators.  We want to left-multiply $U$ by a generator which is either a generator in the Abelian group $G$ or an element of the group $H$.
Note that the braid group representation image is generated by a product of an abelian group element $A'$ and a $C'$ operator, but we describe the left-multiplication by the abelian elements $A'$ and the $C'$ operators separately.  Left-multiplying by an abelian generator $A'$ is easy: simply multiply $A$ by the abelian generator.  Left multiplying by a $C'$  is done in two steps by considering the identity $C'AC=(C'A{C'}^\dagger) C'C=A'C'C$.  First, we commute the $C'$ through the element $A$.  Using the commutation relations we have given, we can do this multiplication in polynomial time: each $S_{k,l}$ gets conjugated by the $C'$ to some other $S_{k,l}$, so commuting the $C'$ through $A$ permutes the different integers describing $A$.  Next, we left-multiply $C$ by the $C'$; we can do this by taking $C X_i C^\dagger$ and conjugating by the $C'$ which can be done in polynomial time, and similarly for $C Z_i C^\dagger$.

\end{proof}

Since any operator $S_{kl}$ is conjugated by this group to another $S_{kl}$, we are able to classically simulate braiding combined with measurement of the operators $S_{kl}$.  Physically, the measurement of $S_{i,i+1}$ corresponds to whether or not the $i$-th and $(i+1)$-th anyons fuse to a $Y_i$ anyon or to ${\bf{1}}$ or $Z$.  However, this 
measurement does not distinguish the fusion outcomes ${\bf{1}}$ or $Z$ from each other.  Ultimately, this is related to the $\#P$-hardness of evaluating the related link invariant in the next section.

\section{\#P-hardness of Evaluating Link Invariants}

In this section, we relate the evaluation of the link invariant $I_{Y_1}(L)$ for certain links to the computation of the Ising model partition function of some graphs $G$.  By our construction, the maximum cuts of the graphs $G$ correspond to ground states of the Ising model.  The evaluation of the link invariant $I_{Y_1}(L)$ would imply the counting of the maximum cuts of the graphs $G$, a well-known $\#P$-hard problem.

Suppose $\cI(L;x,y)$ is a polynomial invariant of oriented links and $p_0=(x_0,y_0)$ is an algebraic point that all evaluations $\cI(L;p_0)$ exist.  Then $\cI(L;p_0)$  is a numerical invariant of oriented links, which sometimes can be identified with a partition function of a $(2+1)$-TQFT.  We are  interested in the computational complexity of such evaluations $\cI(L;p_0)$ for all oriented links. To be precise, we need to specify an input encoding for links.  There are several equivalent encodings such as combinatorial data for link diagrams or words in braid generators. In Lemma \ref{partitionfn} below, we use the plat closure of braids.

There are two $2$-variable generalizations of the Jones polynomial: the HOMFLY polynomial $P(L;l,m)$, and the Kauffman polynomial $F(L;a,z)$.  The Jones polynomial $V(L;t)$ is the specialization of the Kauffman polynomial  $F(L;t^{-\frac{3}{4}}, -(t^{-\frac{1}{4}}+t^{\frac{1}{4}}))$.  For these two polynomials, the complexity of evaluating each algebraic point has been shown to be either $\# P$-hard or polynomially computable classically \cite{Welsh}.  When the computation is polynomial classically, the evaluation has an interpretation using classical topological invariants.  We will refer to the classically polynomial time computable points as the {\it classical points}.

\subsection{Evaluating $I_{X_\e}(L)$}

For the simple object $X_\e$ in $SO(m)_2$, the resulting link invariant $I_{X_\e}(L)$ has a classical interpretation.
They are not specializations of the HOMFLY or the Kauffman polynomials when $m\geq 7$, and satisfy a $(\frac{m+5}{2})$-term skein relation.
When $m=3$, $I_{X_\e}(L)$ is the Jones polynomial at a $6$th root of unity and when $m=5$, an evaluation of the Kauffman polynomial at a $5$th root of unity \cite{jones89}.

\begin{prop}

The exact evaluation of $I_{X_\e}(L)$ is polynomial time classically when $m$ is a prime $p$.

\end{prop}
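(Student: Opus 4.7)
The plan is to express $I_{X_\e}(L)$ as a single quadratic Gauss sum over $\F_p$, computed in polynomial time from the input, and then invoke the classical polynomial-time evaluability of such Gauss sums.

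First, represent $L$ as the closure (Markov or plat) of a braid $\beta \in \B_N$ of length $\ell$, with $N, \ell$ polynomial in the size of $L$. By the standard Turaev construction, $I_{X_\e}(L) = \kappa(L) \cdot \tau\bigl(\rho_{X_\e}(\beta)\bigr)$, where $\kappa(L)$ is an explicit framing and quantum-dimension factor and $\tau$ is a specific class-function trace on the braid group representation. Using the equivalence $\rho_{X_\e}\simeq \gamma$ established earlier in the paper (following \cite{RWen}), expand
\begin{equation}
\gamma(\beta) = p^{-\ell/2} \sum_{\vec{j} \in \F_p^{\ell}} \omega^{\sum_k \epsilon_k j_k^2}\, u_{i_1}^{j_1} u_{i_2}^{j_2} \cdots u_{i_\ell}^{j_\ell}.
\end{equation}

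Second, using the relations $u_i u_{i+1} = z u_{i+1} u_i$, $u_i u_j = u_j u_i$ for $|i-j|>1$, and $u_i^p = 1$, bring each monomial into the canonical form $z^{B(\vec{j})} u_1^{K_1(\vec{j})} \cdots u_{N-1}^{K_{N-1}(\vec{j})}$, in which $B$ is an explicit $\F_p$-bilinear form and each $K_a$ is an explicit $\F_p$-linear functional in $\vec{j}$. The required bookkeeping is bubble-sort-like and takes $O(\ell^2)$ time. The functional $\tau$ is a class function on $E_{p,N-1}^z$; the relevant irreducibles carry nontrivial central characters, so their characters vanish off the center, forcing only $\vec{j}$ with $K(\vec{j}) = 0$ to contribute. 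On the center $\tau$ evaluates $z$ to a fixed $p$th root of unity, which combines with $\omega^{\sum_k \epsilon_k j_k^2}$ and the $z$-phase from $B(\vec{j})$ to give
\begin{equation}
\tau\bigl(\gamma(\beta)\bigr) \;=\; C\cdot p^{-\ell/2} \sum_{\vec{j} \in \ker K} \omega^{Q(\vec{j})},
\end{equation}
where $K: \F_p^\ell \to \F_p^{N-1}$ is the linear map $\vec{j}\mapsto (K_1(\vec{j}),\ldots,K_{N-1}(\vec{j}))$, $Q$ is an explicit $\F_p$-quadratic form on $\F_p^\ell$, and $C$ is an explicit constant, all computed in polynomial time.

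Finally, evaluate the remaining Gauss sum over the linear subspace $\ker K \subseteq \F_p^\ell$: compute a basis of $\ker K$ by Gaussian elimination over $\F_p$, diagonalize $Q|_{\ker K}$ by another Gaussian elimination, and recognize the result as a product of one-variable Gauss sums $\sum_{j\in \F_p}\omega^{a j^2}$ (each given by the Legendre symbol and the classical quadratic Gauss sum) times $p^{\dim \mathrm{rad}(Q|_{\ker K})}$ from the null directions. This entire computation is polynomial in $\ell$ and $\log p$; multiplying by $\kappa(L)$ recovers $I_{X_\e}(L)$ exactly. The main obstacle is the second step — identifying $\tau$ precisely as a central-supported class function with a known character on the center, so that the $z$-contributions from canonical reordering combine cleanly with the diagonal $\omega^{\epsilon_k j_k^2}$ factors into a single quadratic form $Q$. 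This should follow from the Stone--von Neumann structure of the extra-special $p$-group representations underlying the Gaussian model together with the explicit quantum-trace formula in $SO(p)_2$. Once this identification is made, the rest of the algorithm is elementary linear algebra over $\F_p$ together with standard Gauss sum formulas.
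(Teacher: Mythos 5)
Your proposal is correct in substance, but it takes a different route from the paper. The paper simply quotes the topological formula of Jones and Goldschmidt--Jones expressing $I_{X_\e}(L)$ as the Gauss sum $(\tfrac{1}{p})^{\mathrm{genus}}\sum_{v\in H_1(S;\F_p)}\omega^{\langle v,v\rangle}$ over the homology of a Seifert surface, and then reads off the answer from the mod-$p$ Betti number of the double branched cover (for the norm) and a Legendre symbol of the Seifert determinant (for the phase). You instead re-derive an equivalent Gauss sum directly from the Gaussian representation: expand each $\gamma(\sigma_{i_k}^{\pm 1})$ over the extra-special $p$-group, reorder into canonical form to extract a bilinear $z$-phase, kill the non-central monomials using the fact that the Markov trace is a class function supported on the center, and evaluate the resulting quadratic form on $\ker K$ by linear algebra over $\F_p$. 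Both proofs end at the same place --- diagonalizing an $\F_p$-quadratic form and multiplying Legendre symbols and powers of $\sqrt{p}$ --- so the evaluation step is identical; what differs is the provenance of the quadratic form (representation-theoretic versus topological). Your approach is more self-contained and makes the algorithm completely explicit on braid-word input, at the cost of having to justify the one genuinely nontrivial step you correctly flag: that the quantum (Markov) trace on $\Hom(X_\e^{\otimes n},X_\e^{\otimes n})$, which a priori is a weighted sum of ordinary traces over the irreducible summands, coincides with the class function on $\C[E^z_{p,n-1}]$ that vanishes off the center and restricts to a fixed central character. That identification is precisely the theorem of Goldschmidt--Jones (and, for the match with $SO(p)_2$, of Rowell--Wenzl), so it should be cited rather than re-proved; with that citation in place your argument is complete and yields the same polynomial-time bound.
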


\begin{proof}
Here we present links as link diagrams and measure complexity using the number of crossings.

In \cite{jones89,GJo}, $I_{X_\e}(L)$ is expressed as a sum using the symmetrized Seifert surface $S$ of a braid closure: $$(\frac{1}{p})^{\textrm{genus}}\sum_{v\in H_1(S;\F_p)} \omega^{<v,v>},$$ where $<,>$ is the Seifert form.  The norm of this sum is $(\sqrt{p})^r$, where $r$ is the rank of the first $mod$-$p$-homology of the $2$-fold branched cover of the $3$-sphere $S^3$ along $L$.  
This betti number can be computed from the symmetrized Seifert matrix efficiently.  The phase is given by the Legendre symbol $\Bigl(\frac{i^{\left \lfloor n/2 \right \rfloor}det(A)}{p}\Bigr)$, which is also efficiently computable.
\end{proof}

\subsection{Evaluating $I_{Y_1}(L)$}

For the simple object $Y_1$ of $JK_{r=6}$ and its generalizations, the resulting link invariant normalized to oriented links is essentially the evaluation of the Kauffman polynomial at $(a,z)=(-\im e^{-\frac{\pi \im}{m}}, 2sin (\frac{\pi}{m}))$.  Precisely,
 $$I_{Y_1}(L)=\frac{1}{2} (-1)^{c(L)-1} F(L; -\im e^{-\frac{\pi \im}{m}}, 2sin (\frac{\pi}{m}))$$
for a $c(L)$-component oriented link $L$ \cite{HongYBE}.  When $m=3$, $I_{Y_1}(L)$ is the evaluation of the colored Jones polynomial at a $6$th root of unity.

\begin{thm}

The exact evaluation of $I_{Y_1}(L)$ is $\# P$-hard.

\end{thm}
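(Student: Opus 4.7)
The plan is to reduce from the $\#P$-hard problem of exactly counting the maximum cuts of an arbitrary graph \cite{GoldbergJerrum} to exact evaluation of $I_{Y_1}(L)$, using the known correspondence between the Kauffman polynomial at the point $(a,z)=(-\im e^{-\pi \im/m}, 2\sin(\pi/m))$ and the partition function of an Ising-type model.

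First, I would cast $I_{Y_1}(L)$ as a weighted state sum. Presenting $L$ as the plat closure of a braid (the setup of Lemma \ref{partitionfn}), the normalization $I_{Y_1}(L)=\tfrac{1}{2}(-1)^{c(L)-1}F(L;-\im e^{-\pi \im/m}, 2\sin(\pi/m))$ together with a Kauffman-style state expansion with checkerboard shading lets me write
\[
I_{Y_1}(L)\;=\;\kappa(L)\cdot\sum_{\sigma:V(G)\to\{\pm 1\}}\prod_{e=(u,v)\in E(G)} w(\sigma_u,\sigma_v),
\]
where $G$ is the signed medial graph of the chosen shading of the diagram, $\kappa(L)$ is an explicit prefactor depending on the writhe, the number of components $c(L)$, and the loop value at the specialization point (all polynomial-time computable), and the edge weight $w$ takes only two values, $w_=$ and $w_{\neq}$, according to whether $\sigma_u=\sigma_v$. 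A direct computation at $(a,z)=(-\im e^{-\pi \im/m}, 2\sin(\pi/m))$ shows that $w_{\neq}/w_=$ is a root of unity $\zeta$ whose multiplicative order grows with $m$.

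Next, I would specialize the reduction. Given a graph $G_0$ whose MAX-CUT count is sought, I would engineer a family of link diagrams $L_{G_0}^{(k)}$, of size polynomial in $|G_0|$, so that the associated signed medial graph is (a controlled modification of) $G_0$, and so that the partition function reads
\[
I_{Y_1}(L_{G_0}^{(k)})\;=\;\kappa_k\,w_=^{|E(G_0)|}\sum_{j\ge 0} N_j(G_0)\,\zeta^{jk},
\]
where $N_j(G_0)$ counts the 2-colorings of $V(G_0)$ cutting exactly $j$ edges and the twist parameter $k$ rescales $\zeta\mapsto\zeta^k$ (e.g.\ via inserted full twists acting as stat-mech temperature rescalings). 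Taking polynomially many values of $k$ and solving a Vandermonde system in $\zeta^k$ then recovers every $N_j(G_0)$ exactly from the exact values $I_{Y_1}(L_{G_0}^{(k)})$; in particular one extracts $N_{\max}(G_0)$, the maximum-cut count, which by \cite{GoldbergJerrum} is $\#P$-hard. Hence exact evaluation of $I_{Y_1}$ is $\#P$-hard.

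The main obstacle is the first step: making the state-sum/Ising dictionary both precise and usable. I must choose a diagrammatic encoding so that (i) the Kauffman state sum at this particular $(a,z)$ collapses to a two-valued Ising weight rather than a Potts-like weight with more colors, (ii) the prefactor $\kappa(L)$ and loop value $\delta=-(a+a^{-1})/z$ at this specialization are handled correctly (here $a+a^{-1}=-2\sin(\pi/m)$, so $\delta=1$, which must be exploited in controlling components), and (iii) the auxiliary gadgets used to realize a prescribed graph $G_0$ as a signed medial graph of a plat closure do not introduce uncountable extra factors. Once these combinatorial bookkeeping issues are in place, the Vandermonde interpolation and the reduction from MAX-CUT counting are standard.
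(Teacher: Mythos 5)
There is a genuine gap, and it sits exactly at the step you flag as the ``main obstacle,'' plus a second, independent flaw in the interpolation step. First, the dictionary you assume in step (i) does not exist in the form you need: the two-variable Kauffman polynomial does not admit a checkerboard/medial-graph state sum with spins on shaded regions and a two-valued edge weight per crossing (its skein relation mixes smoothings with a crossing switch, unlike the Kauffman bracket), and what is actually available at the special point $z=-(a+a^{-1})$ is the Lickorish--Millett expansion $E(L)=\sum_{S\subset L} a^{-4\langle S,L-S\rangle}$, an Ising-type sum in which the ``spins'' live on the \emph{components} of $L$ and the couplings are linking numbers, not a medial-graph model. (Also, at this point $\frac{a+a^{-1}}{z}-1=-2$, not $1$, so the loop-value bookkeeping you sketch is already off.) This is precisely the state sum the paper uses in its refinement (Lemma \ref{partitionfn}), where a prescribed coupling matrix $J$ is realized through auxiliary components and linking numbers.

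Second, even granting an Ising expansion, your reduction collapses because $m$ is fixed: the weight ratio $\zeta$ is then a root of unity of \emph{constant} order, so the twist parameter $k$ produces only $O(1)$ distinct evaluation points $\zeta^k$. A Vandermonde system of constant size cannot determine the $|E(G_0)|+1$ unknowns $N_j(G_0)$; it only determines the sums of the $N_j$ over residue classes of $j$ modulo the order of $\zeta$ (equivalently, the generating polynomial modulo $x^{\mathrm{ord}(\zeta)}-1$), which does not yield $N_{\max}(G_0)$ or $M(G_0)$. Proving that those aggregated root-of-unity evaluations are themselves $\#P$-hard is essentially the Jaeger--Vertigan--Welsh dichotomy for Tutte/Kauffman evaluations --- which is exactly the paper's actual (short, indirect) proof of this theorem: the point $(a,z)=(-\im e^{-\pi\im/m},2\sin(\pi/m))$ is not among the finitely many ``classical'' points listed in \cite{Welsh}, hence evaluation there is $\#P$-hard. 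If you want a direct reduction instead, you need an effective weight of modulus strictly less than $1$ so that the maximum-cut term dominates; the paper achieves this in the sublink picture, where gadget components give the real weight $y=(a^{-4\pow}+a^{+4\pow})/2$ with $|y|<1$, and then large (polynomially bounded) couplings let a sufficiently accurate value of $E(L)$ determine both $M(G)$ and $N(G)$ (Lemmas \ref{lemmagraphapprox} and \ref{lemmapos}); note also that the $\#P$-hardness of counting maximum cuts is cited to \cite{sharpMAXCUT}, while \cite{GoldbergJerrum} is used for the sign of the Tutte polynomial in the negative-weight regime.
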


\begin{proof}

For the Kauffman polynomial $F(L;a,z)$ such that $a\neq 0, z\neq 0$, the classical points are \cite{Welsh}:

\begin{enumerate}

\item $a=\pm \im$; or

\item $(a,z)=(-q^{\pm 3}, q+q^{-1})$, where $q^{16}=1$ or $q^{24}$=1, but $q\neq \pm \im$; or

\item $(a,z)=(q^{\pm 3}, q+q^{-1})$, where $q^{8}=1$ or $q^{12}$=1, but $q\neq \pm \im$; or

\item $(a,z)=(-q^{\pm 1}, q+q^{-1})$, where $q^{16}=1$, but $q\neq \pm \im$; or

\item $(a,z)=\pm (1, q+q^{-1})$, where $q^{5}=1$.

\end{enumerate}

Noticing that $(a,z)=(-\im e^{-\frac{\pi \im}{m}}, 2sin (\frac{\pi}{m}))$ are not in the list, we obtain the desired result.

\end{proof}

This proof of the $\#P$-hardness of evaluating $I_{Y_1}(L)$ is very indirect.  To have a better understanding of the $\#P$-hardness, we provide two refinements.

\begin{thm}

\begin{enumerate}

\item Any sufficiently accurate approximation of the link invariants $I_{Y_1}(L)$ is $\# P$-hard.

\item The sign of $I_{Y_1}(L)$ is $\# P$-hard to approximate.

\end{enumerate}

\end{thm}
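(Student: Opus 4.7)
The plan is to exploit the Ising-type partition-function identity from the preceding Lemma~\ref{partitionfn}, which expresses $I_{Y_1}(L)$ for the plat closure of a braid as
\begin{equation*}
I_{Y_1}(L) \;=\; \lambda_L \sum_{\sigma} \omega^{H_{G_L}(\sigma)} \;=\; \lambda_L \sum_{k} c_k(G_L)\,\omega^{k},
\end{equation*}
where $G_L$ is the graph built from the braid, $H_{G_L}(\sigma)$ is the cut-size of the $2$-coloring $\sigma$, $\omega$ is a fixed $2m$-th root of unity, and $c_k(G_L)$ is the number of cuts of size $k$ in $G_L$.  The dominant coefficient $c_{k_{\max}(G_L)}(G_L)$ is precisely the number of maximum cuts of $G_L$, a \#P-hard quantity, so both parts of the theorem will be reductions that extract this coefficient from approximate or sign information about $I_{Y_1}(L)$.

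For part~(1), given an arbitrary graph $G$, I would construct in polynomial time a plat-closure link $L_G$ whose associated graph is $G$ augmented by a small collection of fixed gadgets (edge-doublers, parity gadgets, and constant-cut widgets in the spirit of \cite{GoldbergJerrum}).  The gadgets are chosen so that $c_{k_{\max}}(G_{L_G})$ is a known positive multiple of $c_{k_{\max}}(G)$, and so that attaching $N$ parallel copies of a single widget multiplies each $c_k$ by a factor $\mu(k)^N$ satisfying $|\mu(k_{\max})| > |\mu(k)|$ for every $k < k_{\max}$.  Taking $N = O(\log |G|)$ then forces $\sum_{k < k_{\max}} c_k(G_{L_G})\omega^k$ to have modulus below $\tfrac14 |\lambda_L|$, so any additive approximation of $I_{Y_1}(L_G)$ within $\tfrac14 |\lambda_L|$ exactly recovers $c_{k_{\max}}(G_{L_G})$ after dividing by $\lambda_L \omega^{k_{\max}}$ and rounding the real part to the nearest integer, yielding \#MAX-CUT.

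For part~(2), I would build, for each bit position $t = 0, 1, \dots, O(|E(G)|)$, a link $L_{G,t}$ obtained from $L_G$ by inserting a short gadget-braid whose $I_{Y_1}$-effect is multiplication by a chosen unit phase $\zeta_t$; because the image group $\rho_{R_{Y_1}}(\B_n)$ is finite by the theorem of Section~\ref{Qudit}, the realizable $\zeta_t$'s form a polynomial-time-computable list and can be tuned so that the sign of $\operatorname{Re} I_{Y_1}(L_{G,t})$ equals the $t$-th bit of $c_{k_{\max}}(G)$.  Querying each sign then recovers every bit of $c_{k_{\max}}(G)$, so even $O(1)$-accuracy sign approximation for $I_{Y_1}$ is already \#P-hard.

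The main obstacle is verifying the phase-separation inequality $|\mu(k_{\max})| > |\mu(k)|$ for $k < k_{\max}$ at the level of the specific $R$-matrix of $Y_1$ at each odd $m \ge 3$; without such a gap, the subdominant root-of-unity terms could in principle conspire to cancel the leading contribution and hide $c_{k_{\max}}$.  My plan is to write out $\mu(k)$ explicitly from a single widget using the data of Section~\ref{Qudit}, verify by direct calculation that $|\mu(\cdot)|$ is uniquely maximized at $k = k_{\max}$, and extract an $m$-dependent positive lower bound on the gap; with that in hand, the $O(\log |G|)$-padding argument is uniform in $G$ and drives both reductions.
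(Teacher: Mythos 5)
The overall strategy you propose for part (1) --- encoding a max-cut counting problem in the Lickorish--Millett state sum of a plat-closure link and amplifying the maximum-cut contribution by repetition --- is the same as the paper's (Lemmas \ref{partitionfn}, \ref{lemmagraphapprox}, \ref{lemmapos}), but two concrete points break your version. First, the obstacle you defer to the end is fatal as you have set things up: if the state sum really were $\lambda_L\sum_\sigma \omega^{H(\sigma)}$ with $\omega$ a root of unity, then every Boltzmann weight has unit modulus, and no number of parallel copies of a widget acting by unit phases $\mu(k)$ can open the modulus gap $|\mu(k_{\max})|>|\mu(k)|$ you need; the subdominant terms can and do cancel or swamp the leading one. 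The paper's resolution is to choose the linking-number parameter $d$ so that the effective per-edge weight $y=(a^{-4d}+a^{4d})/2=\cos(4d\pi/m)$ is \emph{real} with $|y|<1$, and to use even edge multiplicities so that every Boltzmann weight is positive; the state sum is then a genuine real Ising partition function $Z(J,y)$ in which the maximum cuts dominate with no possibility of cancellation. Second, your padding is quantitatively too small: there are up to $2^{N}$ subdominant cuts ($N$ the number of vertices), each suppressed by only a constant factor per widget copy, so the number of parallel copies must be $\Theta(N)$, not $O(\log|G|)$ (the paper takes edge multiplicity $K\approx(N+1)\ln 2/\ln|y|$). This is still a polynomial-size link, so the reduction survives once the constant is fixed, but as written the subdominant mass is not suppressed below the leading term.

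For part (2) the gap is more serious. Multiplying $I_{Y_1}(L_G)$ by a realizable unit phase $\zeta_t$ only rotates it in the complex plane, so the sign of $\mathrm{Re}\bigl(\zeta_t I_{Y_1}(L_G)\bigr)$ is a function of the argument of $I_{Y_1}(L_G)$; that argument does not encode the binary digits of the integer $c_{k_{\max}}(G)$ in any accessible way, and the finiteness of the braid-group image is irrelevant to this. The bit-extraction scheme is therefore not a valid reduction. The paper instead chooses $d$ so that $-1<y<0$ with $0/1$ couplings, in which case $Z(J,y)$ is a real evaluation of the Tutte polynomial, and the $\#P$-hardness of computing its sign is exactly the Goldberg--Jerrum theorem \cite{GoldbergJerrum} (Lemma \ref{lemmaneg}). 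Without that external input, or a genuinely new argument replacing it, part (2) remains unproven.
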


The specialization of the Kauffman polynomial $F(L;a,z)$ to $z=-(a+a^{-1})$ has a state sum due to R.\ Lickorish and K.\ Millet \cite{LM}:

$$E(L)={2}(-1)^{c(L)-1}F(L)=\sum_{S \subset L} a^{-4 \langle S,L-S\rangle},$$\label{L-M}

\noindent where the summation is over the $2^{c(L)}$ sublinks of $L$ including the empty link, $a=-i\exp(-i \pi/m)$, and $\langle S,L-S\rangle$ is the total linking number of the two sublinks $S$ and $L-S$.  This state sum of $I_{Y_1}(L)$ allows us to translate Ising model partition functions to $I_{Y_1}(L)$ for certain links.

\subsection{$\#P$-hardness of approximation}

We first define a partition function for an Ising spin system, a quantity frequently used in physics and closely related to the Tutte polynomial.
The restriction to integer entries for the matrix below is because those will be the only cases we need.
\begin{definition}
Let $J=(J_{ij})_{1\leq i,j\leq N}$ be a symmetric $N\times N$ matrix with integer entries and with diagonal entries of $J$ equal to $0$.  Let $y$ be any real number.
Define the {\it Ising partition function} to be
\begin{eqnarray}
\label{ZJdef}
Z(J,y)&=&\sum_{\sigma\in \{-1,1\}^N} y^{\sum_{i<j} J_{ij} \delta_{\sigma_i,\sigma_j}},
\end{eqnarray}
where the sum is over $N$-dimensional vectors $\sigma$ with entries chosen from the set $\{-1,1\}$ and
where $\delta_{\sigma_i,\sigma_j}$ is the Kronecker symbol.

The quantities $y^{\sum_{i<j} J_{ij} \delta_{\sigma_i,\sigma_j}}$ are called the {\it Boltzmann weights}.
\end{definition}

Often in physics one restricts to $y>0$ so that the Boltzmann weights are positive, but we will not make this restriction here.
Also, often in physics one considers a closely related quantity:
\be
\label{oftenphysics}
\sum_{\sigma\in \{-1,1\}^N}
\exp(-\beta \sum_{i<j} J_{ij} \sigma_i \sigma_j).
\ee
Setting
\be
\beta=-\frac{1}{2}\ln(y),
\ee
the quantity of Eq.~(\ref{oftenphysics}) is equal to
\begin{eqnarray}
\sum_{\sigma\in \{-1,1\}^N}
\exp(-\beta \sum_{i<j} J_{ij} \sigma_i \sigma_j)&=&
\sum_{\sigma\in \{-1,1\}^N}
\exp\Bigl(-\beta \sum_{i<j} J_{ij} (2\delta_{\sigma_i,\sigma_j}-1)\Bigr)\\ \nonumber
&=&Z(J,y) \exp(\beta \sum_{i<j} J_{ij})
\\ \nonumber
&=& Z(J,y) \Bigl( \sqrt{y} \Bigr)^{-\sum_{i<j} J_{ij}},
\end{eqnarray}
so that Eq.~(\ref{oftenphysics}) is the same as Eq.~(\ref{ZJdef}) up to a trivial multiplication by $\exp(\beta \sum_{i<j} J_{ij})$.  We prefer to use Eq.~(\ref{ZJdef}) because it will be simpler later when dealing with negative $y$.

Lemma \ref{partitionfn} below shows that for certain $J,y$, we can define a link whose link invariant is equal to the partition function $Z(J,y)$, up to multiplication by a quantity that can be computed easily.  The allowed $y$ take certain discrete values depending upon $m$.  The allowed $y$ obey $|y|\leq 1$, and we will choose $y$ such that $|y|<1$ to obtain a nontrivial $Z(J,y)$.  Importantly, the size of the link will only be polynomially large.

We then give two different proofs of $\#P$-hardness, each leading to a slightly different improvement of the $\#P$-hardness.   In the first proof, we choose $y$ and $J$ such that all the Boltzmann weights are positive; then, by taking the coupling constants in the Ising system large, a sufficiently accurate evaluation of the Ising partition function allows one to count the number of maximum cuts a given graph has, a known $\#P$-hard problem.
The second proof is based on considering a different regime using negative Boltzmann weights; in this case, a known result \cite{GoldbergJerrum} shows that evaluating the sign of $Z(J,y)$ is $\#P$-hard.

The sign of the Boltzmann weights depends upon the sign of $y$ and on the parity of the entries of $J$.
For every odd $m\geq 3$, there exists a choice of $\pow$, with $1 \leq \pow \leq m-1$, such that $y$ in Eq.~(\ref{ydef}) below is negative and greater than $-1$; this regime will give a complex $\beta$.  In this case, the Boltzmann weights may be negative if $J$ has odd entries and this case is the basis of the second proof.  To obtain positive Boltzmann weights, we can either restrict to matrices $J$ with even entries, or we can consider odd $m>3$, for which it is possible to pick $\pow$ such that $y$ is positive and less than $1$.
To keep a unified treatment for all $m$, we follow the choice of using matrices with even entries in the first proof.

The positive Boltzmann weight approach has the advantage that the quantity that we show is $\#P$-hard to approximate has a simpler experimental realization than the second regime does: the absolute value squared of this quantity is the probability that preparing pairs of $Y$ particles that fuse to the vacuum, then braiding them in a way determined by the link, and then fusing them together in pairs will have all pairs fuse to the vacuum, while the sign is not in principle measurable without doing an experiment that interferes different particle trajectories.
Also, the result below that it is $\#P$-hard to approximate this quantity to a multiplicative accuracy better than a given amount (this accuracy is $\exp(-{\textrm{poly}}(N))$ can be straightforwardly translated into a result that it is $\#P$-hard to approximate to an additive accuracy better than a given amount since we can give a lower bound on the quantity itself, being a sum of positive terms.

The negative Boltzmann weight approach of lemma \ref{lemmaneg} below has the advantage that since we show that it is $\#P$-hard to approximate the sign of the link invariant, it shows that approximating the link invariant to any (positive) multiplicative accuracy is hard, improving greatly on the exponentially small multiplicative accuracy required in the first regime.

\begin{lemma}
\label{partitionfn}
Let $J=(J_{ij})$ be a symmetric $N\times N$ matrix with $J_{ij}\in \Z, J_{ii}=0$, and $\pow$ be any integer.
Set
\be
\label{ydef}
y=\frac{a^{-4\pow}+a^{+4\pow}}{2},
\ee
where
\be
a=-i\exp(-i \pi/m).
\ee

Define $P(J)$ to be the sum of the positive entries of $J$, and $A(J)=\sum_{1\leq i,j\leq N} |J_{ij}|$.  Note that $P(J)$ and $A(J)$ both are even since $J$ is symmetric.

Then, there is a link $L$ such that the quantity $E(L)={2} (-1)^{c(L)-1} F(L)$ defined by Lickorish and Millett \cite{LM} obeys
\be
\label{claim}
E(L)=Z(J,y) a^{-2\pow P(J)}    \Bigl(\sqrt{y}\Bigr)^{-\sum_{i<j} J_{ij}}    \Bigl(\sqrt{2(a^{-4\pow}+a^{+4\pow})}\Bigr)^{A(J)/2},
\ee
where $c(L)$ is the number of components of the link and
where we choose the sign of the square-roots such that $\sqrt{y} \sqrt{2(a^{-4\pow}+a^{+4\pow})}=a^{-4\pow}+a^{+4\pow}$.  The number of components $c(L)$ is equal to
$N+\sum_{i<j} |A_{ij}|$.

The number of crossings in the link is at most polynomial in $N+\pow A(J)$.  The link can be presented as the plat closure of a braid of $2c(L)$ strands of length which is at most polynomial in $N+\pow A(J)$.
\begin{proof}
Lickorish and Millett show that $E(L)$ can be written as a sum
\be
\label{LMeq}
E(L)=\sum_{S \subset L} a^{-4 \langle S,L-S\rangle}.
\ee
If a link $L$ has $c(L)$ components, then a sublink $S$ can be specified by specifying, for each component, whether that component is in $S$ or not.
We do this by defining, for each sublink $S$, a vector $s$ with entries $s_i$ for $i=1,...,c(L)$, such that $s_i=+1$ if the $i$-th component is in $S$ and $s_i=-1$ otherwise.
Let $\langle i,j \rangle$ be the link number between the $i$-th component and the $j$-th component.
The invariant $\langle S,L-S\rangle$ is then equal to the sum of $\langle i,j\rangle$ over pairs $i\in S$ and $j\in L-S$.

So,
\be
-4 \langle S,L-S\rangle=-\sum_{i \neq j} (1+s_i) (1-s_j) \langle i,j \rangle=-2\sum_{i < j} (1-s_i s_j) \langle i,j \rangle
\ee
So, Eq.~(\ref{LMeq}) is equal to
\be
\label{boltzmann}
E(L)=\sum_{s \in \{-1,1\}^{c(L)}} a^{2 \sum_{i < j} s_i s_j \langle i,j \rangle}
a^{-2 \sum_{i < j} \langle i,j \rangle}.
\ee

We now show how to construct the desired link $L$.  We pick
\be
c(L)=N+\sum_{i<j} |J_{ij}|.
\ee
So, we can define a one-to-one mapping from the set of triples of integers, $i,j,n$
with $1\leq i<j\leq N$ and $1 \leq n \leq |J_{ij}|$ to the set of integers from $N+1$ to $c(L)$.  We specify this map by adding integers from $1$ to $|J_{ij}|$ to $N$ following the lexicographical order of $(i,j)$'s, and denote this mapping by a function $F(i,j,n)$.

Pick $\langle i,j \rangle=0$ if both $i$ and $j$ are less than or equal to $N$ or if both $i$ and $j$ are greater than or equal to $N$.
Consider each triple $i,j,n$ as above.  Then pick
\be
\langle i,F(i,j,n)\rangle=\pow
\ee
for all $n$.  Pick
\be
\langle j,F(i,j,n)\rangle=\pow\cdot {\rm sgn}(J_{ij})
\ee
 for all $n$, where ${\rm sgn}(x)$ is the sign function: $+1$ for $x>0$ and $-1$ for $x<0$.  Note that by the symmetry of the linking number $\langle  i,F(i,j,n)\rangle = \langle F(i,j,n),i\rangle$ and similarly $\langle j,F(i,j,n)\rangle=\langle F(i,j,n),j\rangle$.  Let all other link numbers $\langle k,l\rangle=0$.

This completes the description of the link; we now show that the link invariant $E(L)$ is the desired result.
We do this by defining
\be
X(L,\sigma)=\sum_{s\in \{-1,1\}^{c(L)}}^{s_i=\sigma_i, 1\leq i \leq N}
a^{2\sum_{i<j} s_i s_j \langle i,j \rangle}
a^{-2\sum_{i < j} \langle i,j \rangle}.
\ee
The summation notation means the sum over all $c(L)$-dimensional vectors $s$ with with entries $+1$ or $-1$, such that the first $N$ entries of $s$ are equal to those of $\sigma$ for some $N$-dimensional vector $\sigma$.
Then,
\be
E(L)=\sum_{\sigma \in \{-1,1\}^N} X(L,\sigma).
\ee

To compute $X(L,\sigma)$, note that this is equal to
$$X(L,\sigma)=a^{-2\sum_{1\leq i < j\leq c(L)} \langle i,j \rangle}$$
\begin{eqnarray}
&&
\prod_{1 \leq i<j \leq N}\,
\Bigl( \prod_{n=1}^{|J_{ij}|} \; \Bigl(\sum_{s_{F(i,j,n)}\in \{-1,1\}} \,a^{2s_i s_{F(i,j,n)} \langle i,F(i,j,n)\rangle
 +2s_j s_{F(i,j,n)} \langle j,F(i,j,n)\rangle} \Bigr)
 \Bigr) \\ \nonumber
&=& a^{-2\pow P(J)} \prod_{1 \leq i<j \leq n} \, W(s_i,s_j,J_{ij}),
\end{eqnarray}
where
$$W(s_i,s_j,J_{ij})=$$
\begin{eqnarray}
&&
\prod_{n=1}^{|J_{ij}|} \Bigl(  \sum_{s_{F(i,j,n) )\in \{-1,1\}} }} a^{2s_i s_{F(i,j,n)\langle i,F(i,j,n)\rangle
 +2s_j s_{F(i,j,n)} \langle j,F(i,j,n)\rangle} \Bigr)
\\ \nonumber
&=&
\Bigl(
 \sum_{s_{F(i,j,n)} \in \{-1,1\}} a^{2s_i s_{F(i,j,n)} \langle i,F(i,j,n)\rangle
 +2s_j s_{F(i,j,n)} \langle j,F(i,j,n)\rangle} \Bigr)^{|J_{ij}|}.
\end{eqnarray}

A direct computation gives in the case
 $s_i={\rm sgn}(J_{ij}) s_j$ that $W(s_i,s_j,J_{ij})=(a^{-4\pow}+a^{+4\pow})^{|J_{ij}|}$ and
in the case that $s_i=-{\rm sgn}(J_{ij})s_j$ that $W(s_i,s_j,J_{ij})=2^{|J_{ij}|}$.
Let $y$ be as defined in Eq.~(\ref{ydef}) and let
\be
z=2(a^{-4\pow}+a^{+4\pow}).
\ee
Then
\be
W(s_i,s_j,J_{ij})=\sqrt{y}^{J_{ij} s_i s_j} \sqrt{z}^{|J_{ij}|},
\ee
where the ambiguity in the sign of the square-root is resolved by choosing $\sqrt{y} \sqrt{z}=a^{-4\pow}+a^{+4\pow}$.

So,
\be
X(L,\sigma)= y^{\sum_{i<j} J_{ij} \delta_{\sigma_i,\sigma_j}}  \Bigl(\sqrt{y}\Bigr)^{-\sum_{i<j} J_{ij}}
a^{-2\pow P(J)} \sqrt{z}^{\sum_{ i<j} |J_{ij}|}.
\ee
Comparing to Eq.~(\ref{claim}) completes the proof.

To present the link as the plat closure of a braid, define a braid with $2c(L)$ strands.  Above, we have specified the linking number $\langle i,j \rangle$ for each pair of components $i,j$ of the link.  The braid will be the product of a sequence of shorter braids, one for each pair $i<j$ for which $\langle i,j \rangle \neq 0$.  In each such shorter braid, apply the product of braid group generators $\sigma_{2(j-1)} ... \sigma_{2i+1} \sigma_{2i}$  which moves the $2i$-th strand to the right, braiding it clockwise around neighboring strands, until it is immediately to the left of the $2j-1$-th strand.  Apply $\sigma_{2j-1}^{+d}$ or $\sigma_{2j-1}^{-d}$ depending on the desired sign of the linking number $\langle i,j \rangle$.  Then apply the inverse of the product
$\sigma_{2(j-1)} ... \sigma_{2i+1} \sigma_{2i}$.  This completes the description of the shorter braids.  The plat closure of the braid which is the product of these shorter braids is the desired link.  Since there are only polynomially many pairs $i,j$, the braid has polynomial length as claimed.
\end{proof}
\end{lemma}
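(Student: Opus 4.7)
The plan is to use the Lickorish--Millett state sum (\ref{L-M}) as the bridge between $E(L)$ and $Z(J,y)$. Each sublink $S\subset L$ is encoded by a vector $s\in\{-1,1\}^{c(L)}$ whose $i$-th entry is $+1$ iff the $i$-th component lies in $S$. A direct expansion shows
\[
-4\langle S,L-S\rangle = -2\sum_{i<j}(1-s_is_j)\langle i,j\rangle,
\]
so that $E(L)=a^{-2\sum_{i<j}\langle i,j\rangle}\sum_{s}a^{2\sum_{i<j}s_is_j\langle i,j\rangle}$. This already looks like an Ising sum with couplings given by linking numbers, but the parameter it produces is $a^{2\langle i,j\rangle s_is_j}$ rather than $y^{J_{ij}\delta_{\sigma_i,\sigma_j}}$; the whole point of the construction is to convert the former into the latter.

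The key step is to introduce auxiliary components that play the role of disorder variables. Take $c(L)=N+\sum_{i<j}|J_{ij}|$, and use the bijection $F(i,j,n)$ from triples $(i,j,n)$ with $1\leq n\leq |J_{ij}|$ to the indices $N+1,\dots,c(L)$. Set all linking numbers zero except for $\langle i,F(i,j,n)\rangle=\pow$ and $\langle j,F(i,j,n)\rangle=\pow\cdot\mathrm{sgn}(J_{ij})$. Then split the sum over $s$ into a sum over the first $N$ coordinates $\sigma\in\{-1,1\}^N$ and a sum over the remaining coordinates, and observe that each auxiliary sum factorizes: for each triple $(i,j,n)$ we get $\sum_{\tau\in\{-1,1\}}a^{2\pow\tau(\sigma_i+\mathrm{sgn}(J_{ij})\sigma_j)}$, which evaluates to $a^{-4\pow}+a^{+4\pow}$ when $\sigma_i=\mathrm{sgn}(J_{ij})\sigma_j$ and to $2$ otherwise. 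Compactly,
\[
W(\sigma_i,\sigma_j,J_{ij})=\bigl(\sqrt{y}\bigr)^{J_{ij}\sigma_i\sigma_j}\bigl(\sqrt{z}\bigr)^{|J_{ij}|},\qquad z=2(a^{-4\pow}+a^{+4\pow}),
\]
with the sign of the square root fixed by $\sqrt{y}\sqrt{z}=a^{-4\pow}+a^{+4\pow}$. Multiplying over pairs $(i,j)$ and combining with the global prefactor $a^{-2\sum\langle i,j\rangle}=a^{-2\pow P(J)}\cdot(\text{real factor already counted})$ gives
\[
X(L,\sigma)=y^{\sum_{i<j}J_{ij}\delta_{\sigma_i,\sigma_j}}\bigl(\sqrt{y}\bigr)^{-\sum_{i<j}J_{ij}}a^{-2\pow P(J)}\sqrt{z}^{A(J)/2},
\]
and summing over $\sigma$ yields exactly (\ref{claim}).

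The main bookkeeping obstacle is tracking the prefactor $a^{-2\sum_{i<j}\langle i,j\rangle}$ together with the $\sqrt{y}$ and $\sqrt{z}$ branches: one must verify that the linking numbers contributing to $\sum\langle i,j\rangle$ decompose as (twice) $\pow P(J)$ plus a contribution absorbed into the $\sqrt{y}^{-\sum J_{ij}}\sqrt{z}^{A(J)/2}$ normalization, with consistent sign conventions so that $\sqrt{y}\sqrt{z}=a^{-4\pow}+a^{+4\pow}$ holds globally rather than up to a $\pm1$ per factor. All other steps are straightforward.

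Finally, to realize $L$ as the plat closure of a braid on $2c(L)$ strands, process each pair $(i,j)$ with $\langle i,j\rangle\neq 0$ in turn. For each such pair, use a word of the form $\sigma_{2(j-1)}\cdots\sigma_{2i+1}\sigma_{2i}$ to slide the $2i$-th strand adjacent to the $(2j-1)$-th strand, apply $\sigma_{2j-1}^{\pm \pow}$ to install the desired signed linking number, and then apply the inverse sliding word. Concatenating over the $O(N+A(J))$ nonzero pairs produces a braid of length polynomial in $N+\pow A(J)$, and its plat closure realizes $L$. Since the crossing count is bounded by the braid length, the final polynomial bound on the number of crossings follows.
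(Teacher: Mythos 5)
Your proposal is correct and follows essentially the same route as the paper's own proof: the Lickorish--Millett state sum, the auxiliary components indexed by $F(i,j,n)$ with linking numbers $\pow$ and $\pow\cdot\mathrm{sgn}(J_{ij})$, the factorized sums giving $W=\bigl(\sqrt{y}\bigr)^{J_{ij}\sigma_i\sigma_j}\bigl(\sqrt{z}\bigr)^{|J_{ij}|}$, and the same plat-closure braid construction. The bookkeeping you flag works out exactly as you suspect, since $\sum_{i<j}\langle i,j\rangle=\pow P(J)$, so the prefactor is precisely $a^{-2\pow P(J)}$.
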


\begin{definition}
Given a graph $G$, with a set of vertices $V$, define a cut of $G$ to be a partitioning of the vertices of the graph into two subsets, $S$ and $V-S$.  Define the size of a cut $S,V-S$ to be the number of edges in the graph connecting vertices in $S$ to vertices in $V-S$, and denote it as
$\rmsize(G,S)$.  Define $M(G)$ to be the maximum over all cuts of graph $G$.  Define $N(G)$ to be the number of cuts with size equal to $M(G)$, so that
\be
N(G)=| \{S\subseteq V| \rmsize(G,S)=M(G) \}|.
\ee

Note that according to this definition we consider $S,V-S$ to be a distinct cut from $V-S,S$, so that a cut is defined by an ordered pair of sets.
\end{definition}

\begin{lemma}
\label{lemmagraphapprox}
Consider any $N\times N$ matrix $J$ with all diagonal entries equal to zero and with all other entries equal to either $0$ or $K$ for some even integer $K>0$.

Define a graph $G$ to have $N$ vertices, labelled by integers between $1$ and $N$, and to have edges connecting vertices $i,j$ if and only if $J_{ij}\neq 0$.

Fix $\pow$ such that Eq.~(\ref{ydef}) gives $|y|<1$.

Then, the quantity $Z(J,y)$ defined in Eq.~(\ref{ZJdef}) obeys
\be
\label{approx}
N(G) |y|^{K (|E|-M(G))} \leq Z(J,y)
 \leq
N(G) |y|^{K (|E|-M(G))}+  |y|^{K (1+|E|-M(G))} 2^{N},
\ee
where $|E|$ is the cardinality of the edge-set of the graph so that $K |E|=\sum_{ij} J_{ij}$.
\begin{proof}
There is an obvious one-to-one mapping between choices of the vector $\sigma$ in Eq.~(\ref{ZJdef}) and cuts: for a given choice of the vector $\sigma$, define a set $S$ to contain all vertices $i$ such that $\sigma_i=+1$ and define $V-S$ to contain all other vertices.
Then, Eq.~(\ref{ZJdef}) gives $Z(J,y)$ as a sum over cuts.  For a maximum cut, the Boltzmann weight equals $|y|^{K (|E|-M(G))}$,
while for all other cuts the Boltzmann weight is at least a factor of $|y|$ smaller.  Since there are only $2^{N}$ cuts and since every cut contributes a positive term in the sum, the desired result follows.
\end{proof}
\end{lemma}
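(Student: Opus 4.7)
The plan is to reduce the partition function $Z(J,y)$ to a sum over cuts of $G$ and then isolate the dominant contribution coming from maximum cuts.

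First I would set up the bijection $\sigma \mapsto S := \{i : \sigma_i = +1\}$ between $\{-1,1\}^N$ and ordered cuts $(S, V-S)$. Under this bijection, the pair $(i,j)$ with $i<j$ contributes to $\sum_{i<j} J_{ij}\delta_{\sigma_i,\sigma_j}$ exactly when $ij$ is an edge of $G$ (i.e.\ $J_{ij}=K$) that lies inside either $S$ or $V-S$. The edges inside $S$ or $V-S$ are precisely the edges \emph{not} in the cut, so
\[
\sum_{i<j} J_{ij}\delta_{\sigma_i,\sigma_j} \;=\; K\bigl(|E|-\rmsize(G,S)\bigr).
\]
Hence $Z(J,y) = \sum_{S\subseteq V} y^{K(|E|-\rmsize(G,S))}$.

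Next I would use the hypothesis that $K$ is even to conclude that each Boltzmann weight is nonnegative regardless of the sign of $y$: writing $y^{K m} = (y^2)^{Km/2} = |y|^{Km}$, every term equals $|y|^{K(|E|-\rmsize(G,S))}$. Because $|y|<1$, the map $r \mapsto |y|^{K(|E|-r)}$ is strictly increasing in $r$, so cuts of maximum size $M(G)$ produce the largest Boltzmann weight, namely $|y|^{K(|E|-M(G))}$, while every other cut has $\rmsize(G,S)\le M(G)-1$ and hence contributes at most $|y|^{K(1+|E|-M(G))}$.

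Finally I would split the sum into the contribution from the $N(G)$ maximum cuts and the contribution from the remaining at most $2^N - N(G) \le 2^N$ cuts. Dropping the non-maximum terms (which are nonnegative) gives the lower bound $N(G)|y|^{K(|E|-M(G))}$; bounding each non-maximum term by $|y|^{K(1+|E|-M(G))}$ and summing gives the upper bound. No step here looks genuinely hard; the only subtlety to watch is confirming positivity of the Boltzmann weights when $y<0$, which is exactly why the hypothesis ``$K$ even'' is in play, and making sure that the exponent shift of one unit in the ``non-maximum'' terms correctly reflects the definition of $M(G)$ as the strict maximum rather than an average.
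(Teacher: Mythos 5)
Your proposal is correct and follows essentially the same route as the paper: the bijection between spin configurations and ordered cuts, identifying the Boltzmann weight $|y|^{K(|E|-\rmsize(G,S))}$, and splitting off the $N(G)$ maximum-cut terms from the at most $2^N$ remaining ones. You are in fact slightly more careful than the paper in spelling out why evenness of $K$ guarantees nonnegative weights when $y<0$, which the paper leaves implicit.
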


\begin{lemma}
\label{lemmapos}
Fix $\pow$ such that Eq.~(\ref{ydef}) gives $|y|<1$.

We consider symmetric matrices $J$ such that all diagonal entries are equal to zero and all other entries are equal to either $0$ or $K$ for
$K>0$ chosen to be the smallest even integer greater than or equal to
\be
(N+1) \ln(2)/\ln(|y|).
\ee

Consider the problem of approximating $Z(J,y)$ to a multiplicative accuracy within the range $[1-2^{-N-3},1+2^{-N-3}]$.  That is, we want to compute a number $\tilde Z$ such that it is guaranteed that $(1-2^{-N-3})\tilde Z \leq Z(J,y) \leq (1+2^{-N-3}) \tilde Z$.

This approximation problem is $\#P$-hard.  Further, since $\pow$  can be chosen independently of $N$, $K$ is bounded by a constant times $N$ and so
lemma \ref{partitionfn} constructs a link $L$ with a number of crossings that is polynomial in $N$ such that the evaluation of the link invariant $E(L)$ computes the partition function $Z(J,y)$ using Eq.~(\ref{claim}).
\begin{proof}
Consider an arbitrary graph $G$ with $N$ vertices. Let the matrix $J$ be $K$ times the adjacency matrix of $G$.  We will show that being able to evaluate $Z(J,y)$ to the given accuracy allows one to compute $M(G)$ and $N(G)$.

Note that $N(G)\leq 2^N$.
Also note that for the given $K$,
$ |y|^{K (1+|E|-M(G))} 2^N \leq (1/2) |y|^{K (|E|-M(G))}$.  Therefore, all graphs $G$ with a given $M(G)$ have a $Z(J,y)$ that is at least twice as large as that of any graph $G'$ with $M(G')<M(G)$.
So, given an approximation $\tilde Z$ with the given accuracy, it is possible to determine $M(G)$ exactly since $M(G)$ is an integer.

Next we show that we can determine $N(G)$.
For the given $K$, Eq.~(\ref{approx}) gives
\begin{eqnarray}
N(G) |y|^{K (|E|-M(G))} \leq Z(J,y) & \leq &
N(G) |y|^{K (|E|-M(G))}+  |y|^{K (1+|E|-M(G))} 2^{N}
\nonumber
\\ \nonumber
& \leq &  (N(G)+1/2)|y|^{K (|E|-M(G))} .
\end{eqnarray}
Equivalently,
\be
N(G) \leq \frac{Z(J,y)}{ |y|^{K (|E|-M(G))}} \leq N(G)+1/2.
\ee

Let
\be
\tilde N=\frac{\tilde Z}{ |y|^{K (|E|-M(G))}}.
\ee
So, $(1-2^{-N-3})\tilde N\leq N(G)+1/2$ so
\be
(1-2^{-N-3})\tilde N -1/2
\leq N(G) \leq (1+2^{-N-3})\tilde N.
\ee
Since $N(G)\leq 2^N$, it follows that $\tilde N\leq (2^{N}+1/2) (1+2^{-N-3}) <2^{N+1}$
So, $2^{-N-3}\tilde N<1/4$.  So, since $N(G)$ is an integer, the above equation determines $N(G)$.

So, given this approximation, it is possible to determine both $M(G)$ and $N(G)$.  That is, it is possible to determine both the MAX-CUT of a graph (an $NP$-hard problem) and to count the number of such maximal cuts, a $\#P$-hard problem \cite{sharpMAXCUT}.
\end{proof}
\end{lemma}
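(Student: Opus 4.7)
The plan is to reduce the counting problem $\#$MAX-CUT to approximating $Z(J,y)$, and then invoke Lemma \ref{partitionfn} to realize this as a link-invariant computation. Given an arbitrary graph $G$ on $N$ vertices with edge set $E$, I would let $J = K \cdot A$, where $A$ is the adjacency matrix of $G$ and $K$ is the smallest even integer with $|y|^K \cdot 2^{N+1} \leq 1$. Since $|y|$ is a fixed constant in $(0,1)$, this $K$ is $\Theta(N)$, matching the bound in the statement.

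With this choice, Lemma \ref{lemmagraphapprox} implies
\[
N(G)\, |y|^{K(|E|-M(G))} \;\leq\; Z(J,y) \;\leq\; \bigl(N(G) + \tfrac{1}{2}\bigr)\, |y|^{K(|E|-M(G))},
\]
so $Z(J,y)$ equals $N(G)\,|y|^{K(|E|-M(G))}$ up to a correction at most $\tfrac{1}{2}$ of the leading factor. The reduction then proceeds in two steps. First, I would recover $M(G)$: since $N(G)\leq 2^N$ while changing $M(G)$ by one rescales the leading term by a factor of at least $|y|^{-K} \geq 2^{N+1}$, a $(1\pm 2^{-N-3})$ approximation $\tilde Z$ unambiguously identifies the integer $M(G)$. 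Second, once $M(G)$ is known, I would form $\tilde N := \tilde Z / |y|^{K(|E|-M(G))}$ and verify that $N(G)$ is forced to be the unique integer lying in a window of length strictly less than $1$ around $\tilde N$. Once $M(G)$ and $N(G)$ have been extracted, I invoke the $\#P$-hardness of counting maximum cuts \cite{sharpMAXCUT} to conclude that approximating $Z(J,y)$ to the stated accuracy is itself $\#P$-hard.

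For the concluding link-invariant statement, I would apply Lemma \ref{partitionfn} with this $J$ and with $\pow$ chosen independent of $N$. Because $K = O(N)$, $|E| = O(N^2)$, and $P(J) = O(KN^2)$, the crossing-number and braid-length bounds of Lemma \ref{partitionfn} are polynomial in $N$, so the constructed $L$ is a polynomial-size link and $E(L)$ differs from $Z(J,y)$ only by the explicit computable prefactor in (\ref{claim}); hence a $(1\pm 2^{-N-3})$ approximation of the link invariant produces a corresponding approximation of $Z(J,y)$.

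The main obstacle is the precision bookkeeping in Step 2. With multiplicative error $2^{-N-3}$ on a quantity $\tilde N$ that can be of size roughly $2^{N+1}$, the induced additive error is only about $\tfrac{1}{4}$; this combines with the $\tfrac{1}{2}$ uncertainty window from Lemma \ref{lemmagraphapprox} to yield an interval of total length just under $1$ containing $N(G)$. Pinning down the integer $N(G)$ therefore relies on the specific balance of parameters — the factor $(N+1)$ (rather than $N$) in the definition of $K$, together with the tightness $2^{-N-3}$ of the approximation — and any looser choice would fail to determine $N(G)$ uniquely.
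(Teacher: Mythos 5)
Your proposal is correct and follows essentially the same route as the paper: the same reduction from counting maximum cuts via $J=K\cdot A$, the same (intended) choice of $K$ so that $|y|^{K}2^{N+1}\leq 1$, the same two-stage recovery of $M(G)$ and then $N(G)$ from the bounds of Lemma \ref{lemmagraphapprox}, and the same precision bookkeeping showing the window around $\tilde N$ has length less than $1$. No substantive differences to report.
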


\begin{lemma}
\label{lemmaneg}
Fix $\pow$ such that Eq.~(\ref{ydef}) gives $-1<y<0$.
Then, the quantity $Z(J,y)$ is real but it is $\#P$-hard to compute its sign, even in the case that the entries of $J$ are either $0$ or $1$.

 Further, since $\pow$  can be chosen independently of $N$, if the entries of $J$ are either $0$ or $1$ (hence bounded in magnitude),
lemma \ref{partitionfn} constructs a link $L$ with a number of crossings that is polynomial in $N$ such that the evaluation of the link invariant $E(L)$ computes the partition function $Z(J,y)$ using Eq.~(\ref{claim}).
\begin{proof}
If the entries of $J$ are either $0$ or $1$, a computation of the sign of $Z(J,y)$ gives a computation of the sign of the Tutte polynomial $T_G(x,y)$ of a graph $G$ whose adjacency matrix is $J$ with $y$ as given here and with $x$ such that $(x-1)(y-1)=2$.  However, computing this sign is $\#P$-hard \cite{GoldbergJerrum}.
\end{proof}
\end{lemma}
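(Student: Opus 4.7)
The plan is to establish reality of $Z(J,y)$, produce a valid choice of $\pow$, reduce the sign of $Z(J,y)$ to the sign of a Tutte polynomial evaluation, and then apply the theorem of Goldberg and Jerrum.

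First, since $a=-i\exp(-i\pi/m)$ satisfies $a^4=\exp(-4i\pi/m)$, I would note that $y=\frac{a^{-4\pow}+a^{4\pow}}{2}=\cos(4\pi\pow/m)$ is real; combined with the observation that $Z(J,y)$ is a polynomial in $y$ with non-negative integer coefficients whenever $J$ is integer-valued, this gives reality of $Z(J,y)$. The condition $-1<y<0$ amounts to $4\pi\pow/m \bmod 2\pi$ lying in the open arc $(\pi/2,\pi)\cup(\pi,3\pi/2)$. For every odd $m\geq 3$ such a $\pow$ exists: for $m=3$ take $\pow=1$, giving $y=-1/2$; for $m=5$ take $\pow=1$, giving $y=\cos(4\pi/5)\in(-1,0)$; in general the $m$ equally spaced points $\{4\pi\pow/m \bmod 2\pi\}$ must land inside this arc for some $\pow$.

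Next I would reduce to a Tutte polynomial evaluation. Let $G$ be the graph on $\{1,\ldots,N\}$ whose edge set is $\{(i,j):J_{ij}=1\}$, and let $k(G)$ denote its number of connected components. Writing $y^{\delta_{\sigma_i,\sigma_j}}=1+(y-1)\delta_{\sigma_i,\sigma_j}$ and applying the standard Fortuin--Kasteleyn expansion of the $q$-state Potts partition function at $q=2$, one obtains the identity
$$Z(J,y)=(x-1)^{k(G)}(y-1)^{N}\,T_G(x,y),\qquad (x-1)(y-1)=2.$$
For $y\in(-1,0)$ we have $y-1\in(-2,-1)$ and hence $x-1=2/(y-1)\in(-2,-1)$, so both prefactors are nonzero real numbers whose signs are computable in polynomial time. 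Therefore determining the sign of $Z(J,y)$ is polynomial-time equivalent to determining the sign of $T_G(x,y)$ at a real point on the Ising hyperbola $(x-1)(y-1)=2$ with both coordinates in $(-1,0)$.

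Finally I would invoke the theorem of Goldberg and Jerrum \cite{GoldbergJerrum}, which establishes $\#P$-hardness of computing the sign of $T_G(x,y)$ at every real point on this portion of the Ising hyperbola (apart from an explicit discrete set of trivially evaluable points). The polynomial-size link statement follows directly from Lemma \ref{partitionfn}: with $J_{ij}\in\{0,1\}$ we have $A(J)\leq N^2$, and because $\pow$ depends only on $m$ and so is $O(1)$ in $N$, the link $L$ has $O(N^2)$ crossings, and equation (\ref{claim}) expresses $E(L)$ as $Z(J,y)$ times explicit real prefactors of easily computable sign. The main obstacle I foresee is verifying that the discrete set of points $(x,y)$ produced by admissible pairs $(\pow,m)$ never lands in the exceptional set of \cite{GoldbergJerrum}; this is a finite check for each $m$ and, in the worst case, can be sidestepped by choosing a different admissible $\pow$ for that $m$ or by passing to a slightly larger odd $m$.
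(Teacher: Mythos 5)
Your argument is the same as the paper's: rewrite $Z(J,y)$ via the Fortuin--Kasteleyn expansion as $(x-1)^{k(G)}(y-1)^{N}T_G(x,y)$ on the Ising hyperbola $(x-1)(y-1)=2$, observe the prefactors are nonzero reals of known sign, and invoke the Goldberg--Jerrum $\#P$-hardness of the sign of $T_G$ there. You simply supply the details (reality of $y=\cos(4\pi\pow/m)$, existence of an admissible $\pow$, the explicit prefactor, and the caveat about Goldberg--Jerrum's exceptional points) that the paper's two-sentence proof leaves implicit.
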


Our improvement for $\#P$-hardness is to relate the link invariant $I_{Y_1}(L)$ to the Ising model partition function.  For certain links, the link invariants allow one to approximate the Ising partition function.  We have seen that approximating the Ising model partition function has at least two  different regimes, one regime where it is $\#P$-hard and one regime where it is $NP$-hard, depending upon whether we compute both $N(G)$ and $M(G)$ or just $M(G)$.  Approximating  $I_{Y_1}(L)$ is more general than just approximating the Ising model partition function at real temperature because the corresponding temperature could be complex, i.e., by picking some very specific links, we constructed a problem with a real temperature, but for arbitrary links we have to deal with a complex temperature.  However, if we assume that all we are doing is approximating the Ising model partition function when we approximate the link invariant, the problem goes from $NP$ to $\#P$ as we approximate it more accurately, while link invariants of computationally universal theories like $SU(2)_3$ instead go from $BQP$ to $\#P$, which is quite different.  Since $BQP$ is believed to be incomparable to $NP$, there is no approximation regime in which such other link invariants would be $NP$.
It is not clear if this $NP$-hard regime will survive complex temperature or not, though, but it would be a very interesting phenomenon.

In \cite{KR},  the quantum double of finite groups are studied.  Some theories could also have finite braid group images but $\#P$-hard link invariants.  It is likely that our theory follows the same pattern
of their analysis, where different approximations to the link invariant
are in $P$, or are $SBP$-hard, or are $\#P$-hard, depending
upon the accuracy of the approximation. It would be interesting
to see if quantum doubles can be efficiently simulated classically.

\end{document}